\theoremstyle{plain}
\theoremstyle{plain}
\newtheorem{theorem}{Theorem}[section]
\newtheorem{corollary}[theorem]{Corollary}
\newtheorem{proposition}[theorem]{Proposition}
\newtheorem{lemma}[theorem]{Lemma}
\theoremstyle{definition}
\newtheorem{definition}[theorem]{Definition}
\theoremstyle{remark}
\newtheorem{remark}[theorem]{Remark}
\theoremstyle{claim}
\newtheorem{claim}[theorem]{Claim}
\theoremstyle{definition}
\newtheorem{notation}[theorem]{Notation}
\theoremstyle{definition}
\theoremstyle{remark}
\mathchardef\emptyset="001F
\numberwithin{equation}{section}
\def\Ddots{\mathinner{\mkern1mu\raise\p@
\vbox{\kern7\p@\hbox{.}}\mkern2mu
\raise4\p@\hbox{.}\mkern2mu\raise7\p@\hbox{.}\mkern1mu}}
\title[]
{Switching in time-optimal problem. \\ The 3-D case with 2-D control.}
\author[A. A. Agrachev]{Andrei A. Agrachev}
\address[A. A. Agrachev]{SISSA, 34136 Trieste,  Italy; Steklov Mathematical Institute, 119991 Moscow, Russia}
\email[A. Agrachev]{agrachev@sissa.it}
\author[C. Biolo]{Carolina Biolo}
\address[Carolina Biolo]{SISSA, Via Bonomea 265, 34136 Trieste, Italy}
\email[Carolina Biolo]{cbiolo@sissa.it}
\date{}
\begin{document}

\begin{abstract}
We study local structure of time-optimal controls and trajectories for a 3-dimensional control-affine system with a 2-dimensional control parameter with values in the disk. In particular, we give sufficient conditions, in terms of Lie bracket relations, for optimal controls to be smooth or to have only isolated jump discontinuities.
\end{abstract}
\maketitle
%\bigskip
%\keywords{\textbf{Keywords:} }
%\bigskip
%\subjclass{}
%\bigskip
%\bigskip
\tableofcontents
\section{Introduction}

This paper is a one more step towards the understanding of the structure of time-optimal controls and trajectories for control affine systems of the form:
$$
\dot q=f_0(q)+\sum_{i=1}^ku_if_i(q),\quad q\in M,\ u\in U, \eqno (1)
$$
where $M$ is a smooth $n$-dimensional manifold, $U=\{(u_1,\ldots,u_k):\sum\limits_{i=1}^k u_i^2\le 1\}$ is a $k$-dimensional ball, and $f_0,\,f_1,\,\ldots,\,f_k$ are smooth vector fields. We also assume that $f_1(q),\ldots,f_k(q)$ are linearly independent in the domain under consideration.

The case $k=n$ is the Zermelo navigation problem: optimal controls are smooth in this case. In more general situations, discontinuous controls are unavoidable and, in principle, any measurable function can be an optimal control. Therefore, it is reasonable to focus on generic ensembles of vector fields $f_0,f_1,\ldots,f_k$ and thus avoid a pathologic behavior.

If $k=1,\ n=2$, then, for generic pair of vector fields $f_0,f_1$, any optimal control is piecewise smooth; moreover, any point in $M$ has a neigborhood such that all optimal trajectories contained in the neighborhood are concatenations of at most 2 smooth pieces (have at most one switching in the control-theoretic terminology), see \cite{BP} and \cite{Suss}.
The complexity of optimal controls grows fast with $n$. For $k=1,\ n=3$ generic situation is only partially studied (see \cite{Sch}, \cite{Suss2} and \cite{AS}): we know that any point out of a 1-dimensional Whitney-stratified subset of ``bad points'' has a small neighborhood that contains only optimal trajectories with at most 4 switchings. We still do not know if there is any bound on the number of switchings in the points of the ``bad'' 1-dimensional subset. We know however that the chattering phenomenon (a Pontryagin extremal with a convergent sequences of switching points) is unavoidable for $k=1$ and sufficiently big $n$, see \cite{Ku} and \cite{ZB}.

In this paper, we study the case $k=2,\ n=3$. In particular, for generic triple $(f_0,f_1,f_2)$ we obtain that
any point out of a discrete subset of ``bad points'' in $M$ has a neighborhood such that any optimal trajectory contained in the neighborhood has at most one switching.

Actually we have much more precise results about the structure of optimal controls formulated in Theorems 3.1, 3.5. In particular, we compute the right and the left limits of the control in the switching point in term of the Lie bracket relations. Moreover, we expect that the developed here techniques is efficient  also in the case $k=n-1$ with an arbitrary $n$ and that, in general, complexity of the switchings depends much more on $n-k$ than on $n$.

\section{Preliminaries} \label{sec:preliminaries.section}

In this section we recall some basic definitions in Geometric Control Theory. For a more detailed introduction, see \cite{A}.
\begin{definition}
Given a $n$-dimensional manifold $M$, we call $\mathrm{Vec}(M)$ the \emph{set of the smooth vector fields} on $M$, i.e. each $f\in \mathrm{Vec}(M)$ is a smooth map with respect to $q\in M$
$$f:M\longrightarrow TM
$$
such that if $q\in M$ then $f(q)\in T_q M$.
\end{definition}
\begin{definition}
A \emph{smooth dynamical system} on $M$ is defined by an ordinary differential equation
\begin{equation}
\label{dynamical.system}
\dot{q}=f(q)
\end{equation}
where $q\in M$, $f\in \mathrm{Vec}(M)$.\\
A \emph{solution} of (\ref{dynamical.system}) is a map
$$q:I\longrightarrow M
$$
with $I\subseteq\mathbb{R}$ interval, such that it holds
$$\frac{d}{dt}q(t)=f(q(t))
$$
for every $t\in I$.
\end{definition}
\begin{theorem}
Given a $n$-dimensional manifold $M$ and (\ref{dynamical.system}) a smooth dynamical system on $M$, for each initial point $q_0\in M$ there exists a unique solution $q(t,q_0)$ on $M$, defined in an interval $I\subseteq\mathbb{R}$ small enough, such that $q(0,q_0)=q_0$.
\end{theorem}
\begin{definition}
$f\in \mathrm{Vec}(M)$ is a \emph{complete vector field} if for each $q_0\in M$ the solution $q(t,q_0)$ of (\ref{dynamical.system}) is defined for every $t\in \mathbb{R}$.
\end{definition}
\begin{remark}$f\in \mathrm{Vec}(M)$ with a compact support is a complete vector field.
\end{remark}
\begin{remark}Since we are interested in the local behaviour of trajectories, during all this work we consider directly complete vector fields.
\end{remark}

\begin{definition}
Given a manifold $M$ and $U\subseteq\mathbb{R}^m$ a set, a \emph{control system} is a family of dynamical systems
$$\dot{q}=f_u(q)
$$
where $q\in M$ and $\{f_u\}_{u\in U}\subseteq \mathrm{Vec}(M)$ is a family of vector fields on $M$ parametrized by $u\in U$.\\
U is called \emph{space of control parameters}.
\end{definition}
We are interested in controls, which change during the time.
\begin{definition}
An \emph{admissible control} is a map measurable and essentially bounded
$$\begin{array}{rcc}
u:(t_1,t_2) & \longrightarrow & U\\
t & \longmapsto& u(t),
\end{array}
$$
from a time interval $(t_1,t_2)$ to $U$.\\
We call $\mathcal{U}$ the \emph{set of admissible controls}.
\end{definition}

Therefore, we consider the following control system in $M$
\begin{equation}
\label{control.system}
\dot{q}=f_u(q)
\end{equation}
where $q\in M$ and $\{f_u\}_{u\in \mathcal{U}}\subseteq \mathrm{Vec}(M)$, with $u$ admissible control.\\ \\
With the following theorem we want to show that it is guarantied the locally existence and uniqueness of the solution of the control system which we are considering, for every initial point, choosing an admissible control.
\begin{theorem}
Fixed an admissible control $u\in\mathcal{U}$, (\ref{control.system}) is a non-autonomous ordinary differential equation, where the right-hand side is smooth with respect to $q$, and measurable essentially bounded with respect to $t$, then, for each $q_0\in M$, there exists a local unique solution $q_u(t,q_0)$, depending on $u\in \mathcal{U}$, such that $q_u(0,q_0)=q_0$ and it is lipschitzian with respect to $t$.
\end{theorem}

\begin{definition}
\label{007}
We denote $q_u(t,q_0)$ the \emph{admissible trajectory} solution of (\ref{control.system}), chosen $u\in \mathcal{U}$, and
$$A_{q_0}=\{q_u(t,q_0) : t\geq 0,u\in\mathcal{U} \}
$$
the \emph{attainable set} from $q_0$. \\
Moreover we will write $q_u(t)=q_u(t,q_0)$ if we do not need to stress  that the initial position is $q_0$.
\end{definition}
In this paper, we are going to study an affine control system.
\begin{definition}
An\emph{ affine control system} is a control system with the following form
\begin{equation}
\label{affine.control.system}
\dot{q}=f_0(q)+u_1f_1(q)+\ldots+u_{k}f_{k}(q)
\end{equation}
where $f_0\ldots f_k$ $\in \mathrm{Vec}(M)$ and $u=(u_1, \ldots, u_k)\in \mathcal{U}$ is an admissible control which takes value in the set $U\subseteq \mathbb{R}^k$. The uncontrollable term $f_0$ is called \emph{drift}. \\Moreover, we can consider the $n\times k$ matrix
$$f(q)=\left( \begin{array}{c}f_1(q),\ldots,f_k(q)\end{array}\right)
$$
and rewrite the system (\ref{affine.control.system})
$$\dot{q}=f_0(q)+f(q)u.
$$
\end{definition}

\subsection{Time-optimal problem}
Let us introduce the time-optimal problem.
\begin{definition}
Given the control system (\ref{control.system}), $q_0\in M$ and $q_1\in A_{q_0}$, the \emph{time-optimal problem} consists in minimizing the time of motion from $q_0$ to $q_1$ via admissible trajectories:
\begin{equation}
\label{time-optimal.problem}\left\lbrace \begin{array}{ll}
\dot{q}=f_u(q)&u\in \mathcal{U}\\
q_u(0,q_0)=q_0&\\
q_u(t_1,q_0)=q_1&\\
t_1 \rightarrow \min&
\end{array} \right.
\end{equation}
We call these minimizer trajectories \emph{time-optimal trajectories}, and \emph{time-optimal controls} the correspondent controls.
\end{definition}
\subsubsection{Existence of time-optimal trajectories}
Classical Filippov's Theorem (See \cite{A}) guarantees the existence of a time-optimal control for the affine control system if $U$ is a convex compact and $q_0$ is sufficiently close to $q_1$.
\subsection{First and second order necessary optimality condition}
Now we need to introduce basic notions about Lie brackets, Hamiltonian systems and Poisson brackets, so that we can present the very important first and second order necessary conditions for optimal trajectories: Pontryagin Maximum Principle, and Goh condition.
\begin{definition}
Let $f,g\in \mathrm{Vec}(M)$, we define their \emph{Lie brackets} the following vector field
$$[f,g](q)=\frac{\partial}{\partial t}_{|t=0}e_*^{-t f}g(q), \quad \forall q\in M
$$
where $e_*^{-t f}$ is the push forward of the flow $e^{-t f}$, defined by $f$.\\
Therefore, we present an equivalent definition of Lie brackets, which will help us to understand their geometric meaning:
$$[f,g](q)=\frac{\partial}{\partial t}_{|t=0}e^{-t g}\circ e^{-t f}\circ e^{t g}\circ e^{t f}(q), \quad  \forall q\in M.
$$
\end{definition}
\begin{definition}
An \emph{Hamiltonian} is a smooth function on the cotangent bundle
$$h\in C^\infty(T^*M).
$$
The \emph{Hamiltonian vector field} is the vector field associated to $h$ via the canonical symplectic form $\sigma$
$$\sigma_\lambda (\cdot ,\overrightarrow{h})=d_\lambda h.
$$
Let $(x_1,\ldots,x_n)$ be local coordinates in $M$ and $(\xi_1,\ldots,\xi_n,x_1,\ldots,x_n)$ induced coordinates in $T^*M,\ \lambda=\sum_{i=1}^n\xi_idx_i$. The \emph{symplectic form} has expression $\sigma=\sum^n_{i=1}d\xi_i\wedge dx_i$. Thus, in canonical coordinates, the Hamiltonian vector field has the following form
$$\overrightarrow{h}=\sum^n_{i=1}\left( \frac{\partial h}{\partial \xi_i}\frac{\partial}{\partial x_i} -\frac{\partial h}{\partial x_i}\frac{\partial}{\partial \xi_i}\right).
$$
So the \emph{Hamiltonian system}, which corresponds to $h$, is
$$\dot{\lambda}=\overrightarrow{h}(\lambda), \quad  \lambda \in T^*M,
$$
therefore, in canonical coordinates, it is
$$\left\lbrace
\begin{array}{l}
\dot{x}_i=\frac{\partial h}{\partial \xi_i}\\
\dot{\xi_i}=-\frac{\partial h}{\partial x_i}
\end{array}
\right.
$$
for $i=1,\ldots,n$.
\end{definition}
\begin{definition}
The \emph{Poisson brackets} $\{a,b\}\in \mathcal{C}^\infty(T^*M)$ of two Hamiltonians $a,b\in \mathcal{C}^\infty(T^*M)$ are defined as follows: $\{a,b\}=\sigma(\vec a,\vec b)$; the coordinate expression is:
$$\{a,b\}=\sum_{k=1}^n\left( \frac{\partial a}{\partial \xi_k}\frac{\partial b}{\partial x_k}-\frac{\partial a}{\partial x_k}\frac{\partial b}{\partial \xi_k}\right).
$$
\end{definition}
\begin{remark}
\label{poisson,lie}
Let us recall that, given $g_1$ and $g_2$ vector fields in $M$, considering the Hamiltonians $a_1(\xi,x)=\left\langle \xi, g_1(x)\right\rangle $ and $a_2(\xi,x)=\left\langle \xi, g_2(x)\right\rangle $, it holds
$$\{a_1,a_2\}(\xi,x)=\left\langle \xi, [g_1,g_2](x)\right\rangle.
$$
\end{remark}
\begin{remark}
\label{derivPoiss}
Given a smooth function $\Phi$ in $\mathcal{C}^\infty(T^*M)$, and $\lambda(t)$ solution of the Hamiltonian system $\dot{\lambda}=\overrightarrow{h}(\lambda)$, the derivative of $\Phi(\lambda(t))$ with respect to $t$ is the following
$$\frac{d}{dt}\Phi(\lambda(t))=\{h,\Phi\}(\lambda(t)).
$$
\end{remark}
\subsubsection{Pontryagin Maximum Principle}
Now we give the statement of the Pontryagin Maximum Principle for the time-optimal problem:
\begin{theorem}[Pontryagin Maximum Principle]
Let an admissible control $\tilde{u}$, defined in the interval $t\in [0,\tau_1 ]$, be time-optimal for the system (\ref{control.system}), and let the Hamiltonian associated to this control system be the action on $f_u(q)\in T^*_q M$ of a covector $\lambda\in T^*_q M$: $$\mathcal{H}_u(\lambda)=\left\langle \lambda,f_u(q)\right\rangle . $$
Then there exists $\lambda(t)\in T_{q_{\tilde{u}}(t)}^*M$, for $t\in [0,\tau_1 ]$, never null and lipschitzian, such that for almost all $t\in [0,\tau_1 ]$ the following conditions hold:
\begin{enumerate}
	\item $\dot{\lambda}(t)=\vec{\mathcal{H}}_{\tilde{u}}( \lambda(t))$
	\item $\mathcal{H}_{\tilde{u}}(\lambda(t))= \max_{u\in U} \mathcal{H}_u(\lambda(t))$
\item $\mathcal{H}_{\tilde{u}}(\lambda(t))\geq0$.
\end{enumerate}
Moreover the second condition is called \emph{maximality condition}, and $\lambda(t)$ is called \emph{extremal}.
\end{theorem}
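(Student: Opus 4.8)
The plan is to follow the classical needle-variation argument, reducing time-optimality to a separation statement for a convex cone of admissible endpoint variations. Throughout I write $\tilde f_t=f_{\tilde u(t)}$, let $\tilde q(t)=q_{\tilde u}(t,q_0)$ be the reference trajectory, and let $\Phi_{t,s}$ denote the non-autonomous flow of $\tilde f$ from time $s$ to time $t$, so that $\tilde q(t)=\Phi_{t,0}(q_0)$ and $q_1=\tilde q(\tau_1)$. First I would translate optimality into geometry: since $\tilde u$ is time-optimal, the point $q_1$ cannot be reached from $q_0$ in time strictly less than $\tau_1$, so $q_1$ lies on the boundary of the set attainable in time at most $\tau_1$.

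Next I would introduce the two families of admissible perturbations and compute their first-order effect on the endpoint. For a Lebesgue point $\tau\in(0,\tau_1)$ of $\tilde u$ and a value $v\in U$, the needle variation replacing $\tilde u$ by $v$ on $[\tau-\varepsilon,\tau]$ shifts $\tilde q(\tau)$ by $\varepsilon\bigl(f_v(\tilde q(\tau))-\tilde f_\tau(\tilde q(\tau))\bigr)+o(\varepsilon)$; transporting this forward by the linearized flow yields the endpoint variation vector
$$w(\tau,v)=(\Phi_{\tau_1,\tau})_*\bigl(f_v(\tilde q(\tau))-\tilde f_\tau(\tilde q(\tau))\bigr)\in T_{q_1}M.$$
Shrinking the terminal time contributes the vector $-\dot{\tilde q}(\tau_1)=-\tilde f_{\tau_1}(q_1)$, since $\tilde q(\tau_1-\delta)$ is attainable in time $\tau_1-\delta\le\tau_1$. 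Let $K\subseteq T_{q_1}M$ be the convex cone generated by all the $w(\tau,v)$ together with $-\tilde f_{\tau_1}(q_1)$.

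I would then prove the key proposition that $K\neq T_{q_1}M$. This is the main obstacle: one argues by contradiction, for if $K$ were the whole tangent space one could select finitely many variation vectors whose positive combinations span $T_{q_1}M$, and a topological fixed-point / degree argument applied to the endpoint map (smooth in the finitely many variation parameters) would show that a full neighborhood of $q_1$, in particular points reachable in time strictly less than $\tau_1$, is attainable, contradicting optimality. The delicate points are the uniform first-order estimates for superposed needle variations, the use of approximate continuity at Lebesgue points, and the nondegeneracy needed to run the degree argument.

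Finally, since $K$ is a proper convex cone, a supporting-hyperplane argument yields a nonzero covector $\lambda_1\in T^*_{q_1}M$ with $\langle\lambda_1,w\rangle\le 0$ for all $w\in K$. I would define $\lambda(t)=(\Phi_{\tau_1,t})^*\lambda_1\in T^*_{\tilde q(t)}M$, which is never zero (the flow is a diffeomorphism) and Lipschitz in $t$; identifying the backward transport of a covector along the linearized flow with the Hamiltonian flow of $\mathcal H_{\tilde u}$, via the Poisson--Lie correspondence of Remark \ref{poisson,lie}, gives condition (1), $\dot\lambda=\vec{\mathcal H}_{\tilde u}(\lambda)$. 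Testing the support inequality on $w(\tau,v)$ gives, using $\langle\lambda_1,(\Phi_{\tau_1,\tau})_*X\rangle=\langle\lambda(\tau),X\rangle$, that $\mathcal H_v(\lambda(\tau))\le\mathcal H_{\tilde u(\tau)}(\lambda(\tau))$ for every $v\in U$ and almost every $\tau$, which is the maximality condition (2). Testing it on $-\tilde f_{\tau_1}(q_1)$ gives $\mathcal H_{\tilde u(\tau_1)}(\lambda(\tau_1))\ge 0$; and since the maximized Hamiltonian $\max_{u\in U}\mathcal H_u$ is autonomous, hence preserved along its own flow, and coincides with $\mathcal H_{\tilde u(t)}(\lambda(t))$ along the extremal (a computation via Remark \ref{derivPoiss} and the envelope property at the maximizer), the value $\mathcal H_{\tilde u(t)}(\lambda(t))$ is constant in $t$, so condition (3) holds for all $t$.
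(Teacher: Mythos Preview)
The paper does not prove the Pontryagin Maximum Principle: it is stated without proof as a classical result from optimal control theory, with the reference \cite{A} (Agrachev--Sachkov) implicitly serving as the source. So there is no ``paper's own proof'' to compare against.

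Your sketch is the standard needle-variation argument one finds in that reference and in the original Pontryagin school treatment: build the cone of first-order endpoint variations from needle perturbations and the terminal-time shrinking, show by a degree/open-mapping argument that optimality forces this cone to be proper, separate with a covector, and pull it back along the linearized flow to obtain the adjoint $\lambda(t)$. The outline is correct, and the identification of the three conditions with the separation inequality and with conservation of the maximized Hamiltonian is accurate. The only points you flag as delicate (uniform superposition of needle variations, Lebesgue-point regularity, the topological lemma) are exactly the technical core of any rigorous proof, and you would need to supply or cite them; but as a plan there is nothing missing.
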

\begin{remark}
Given the canonical projection $\pi:TM\rightarrow M$, we denote $q(t)=\pi(\lambda(t))$ the \emph{extremal trajectory}.
\end{remark}
\subsubsection{Goh condition}
\label{subsecgoh}
Finally, we present the Goh condition, on the singular arcs of the extremal trajectory, in which we do not have information from the maximality condition of the Pontryagin Maxinum Principle. We state the Goh condition only for affine control systems (2.3).
\begin{theorem}[\emph{Goh condition}]
\label{Goh.condition}
Let $\tilde q(t),\ t\in[0,t_1]$ be a time-optimal trajectory corresponding to a control $\tilde u$. If $\tilde u(t)\in\mathrm{int}U$ for any $t\in(\tau_1,\tau_2)$,
then there exist an extremal $\lambda(t)\in T_{q(t)}^*M$ such that
\begin{equation}
\label{cond.di.Goh}
\left\langle \lambda(t),[f_i,f_j](q(t))\right\rangle =0,\quad\ t\in(\tau_1,\tau_2),\ i,j=1,\ldots,m.
\end{equation}
\end{theorem}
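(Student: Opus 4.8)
The plan is to derive the Goh condition as a \emph{second-order} necessary condition, since the first-order information coming from the Pontryagin Maximum Principle is, by itself, insufficient to conclude. First I would record what interiority of the control buys us. Writing $h_i(\lambda)=\langle\lambda,f_i(q)\rangle$ and $\mathcal H_u=h_0+\sum_i u_i h_i$, the maximality condition together with $\tilde u(t)\in\mathrm{int}\,U$ forces the first-order stationarity in $u$, namely the vanishing of the switching functions $h_i(\lambda(t))=\langle\lambda(t),f_i(q(t))\rangle=0$ for $i=1,\dots,m$ and $t\in(\tau_1,\tau_2)$. Consequently, every admissible variation of the control on the singular interval produces a first-order endpoint variation that is annihilated by $\lambda$, so the real obstruction to optimality must be sought at second order.

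Next I would set up the second variation through the chronological (Volterra) expansion of the flow. Fixing the reference trajectory $\tilde q$, writing $v=u-\tilde u$ for a perturbation supported in $(\tau_1,\tau_2)$, and pulling the perturbed endpoint back along the reference flow, one expresses it as $\tilde q(\tau_2)$ acted on by $\exp$ of $\int\sum_i v_i g_i(t)\,dt+\tfrac12\int\!\!\int_{s<t}\sum_{i,j}v_i(s)v_j(t)[g_i(s),g_j(t)]\,ds\,dt+\cdots$, where $g_i(t)$ is $f_i$ transported by the reference flow. Pairing with the supporting covector $\lambda$, the first-order term vanishes because $\langle\lambda,g_i(t)\rangle=h_i(\lambda(t))=0$, so the leading surviving contribution is the quadratic form
$$
Q[v]=\frac12\int\!\!\int_{s<t}\sum_{i,j}v_i(s)\,v_j(t)\,\langle\lambda(t),[f_i,f_j](\tilde q(t))\rangle\,ds\,dt,
$$
where I have used Remark~\ref{poisson,lie} (combined with the adjoint flow) to identify $\langle\lambda,[g_i,g_j](t)\rangle$ with $\langle\lambda(t),[f_i,f_j](\tilde q(t))\rangle$.

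Then I would exploit antisymmetry. Setting $W_i(t)=\int_{\tau_1}^t v_i$ and restricting to variations with $W_i(\tau_2)=0$ (so the first-order endpoint displacement closes up), the double integral rewrites as $\sum_{i<j}\int\langle\lambda(t),[f_i,f_j](\tilde q(t))\rangle\,(W_i\dot W_j-W_j\dot W_i)\,dt$. Because time-optimality places $\tilde q(\tau_2)$ on the boundary of the attainable set with $\lambda$ as supporting covector, the second-order necessary condition requires $Q[v]$ to keep a fixed sign on these variations. However $\int(W_i\dot W_j-W_j\dot W_i)\,dt$ equals twice the signed area swept by the loop $t\mapsto(W_i(t),W_j(t))$, and reversing the orientation of the loop reverses its sign. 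Localizing such a loop near a point $\bar t$ at which some $\langle\lambda(\bar t),[f_i,f_j](\tilde q(\bar t))\rangle\neq0$ would therefore make $Q[v]$ positive for one orientation and negative for the other, contradicting the fixed-sign requirement. Hence $\langle\lambda(t),[f_i,f_j](\tilde q(t))\rangle=0$ throughout $(\tau_1,\tau_2)$, which is exactly \eqref{cond.di.Goh}.

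The main obstacle I anticipate is making the second-order expansion and the sign condition on $Q$ rigorous: one must justify the chronological expansion for merely measurable, essentially bounded controls, control the higher-order remainder uniformly as the localized loops shrink, and verify that admissible \emph{interior-valued} variations with prescribed primitives $W_i$ and closing condition $W_i(\tau_2)=0$ genuinely exist while keeping $u\in\mathrm{int}\,U$. Once the quadratic form has been correctly isolated, the confrontation between the antisymmetry of $\langle\lambda,[f_i,f_j]\rangle$ and the fixed-sign requirement is the decisive and conceptually clean step.
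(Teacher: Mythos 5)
The paper does not actually prove this theorem: the Goh condition is quoted as a classical second-order necessary condition and deferred to the literature (it is the second-variation theory in the cited book \cite{A}), so your proposal can only be measured against the standard proof. Your outline does follow that standard proof: first-order stationarity $h_i(\lambda(t))=0$ from interiority of the control, the chronological expansion of the endpoint map, the transformation $W_i(t)=\int_{\tau_1}^t v_i$, and oscillating ``loop'' variations that exploit the antisymmetry of $\langle\lambda,[f_i,f_j]\rangle$.

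There is, however, a genuine gap at the decisive step, namely your justification of why $Q[v]$ must keep a fixed sign. An attainable set of a nonlinear system is in general not convex, so ``$\tilde q(\tau_2)$ lies on the boundary of the attainable set with $\lambda$ as supporting covector'' is not a legitimate inference; the correct route is an open-mapping (Lagrange multiplier) argument applied to the endpoint map. More importantly, even the correct multiplier statement does not give semi-definiteness of the second variation: the actual second-order necessary condition is that, for some multiplier $\lambda$ annihilating the image of the first differential, the quadratic form $\lambda D^2F$ restricted to $\ker DF$ has negative index bounded in terms of the corank of $DF$. When that corank exceeds one, exhibiting a single loop on which $Q$ is positive and another on which it is negative contradicts nothing. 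The standard repair --- the step missing from your argument --- is to show that one nonvanishing value $\langle\lambda(\bar t),[f_i,f_j](\tilde q(\bar t))\rangle\neq 0$ forces the negative index to be \emph{infinite}: place arbitrarily many disjoint, shrinking copies of your loop near $\bar t$; up to the higher-order remainders controlled by the scaling you mention, $Q$ is negative definite on the span of these variations, whose dimension is arbitrarily large, violating the finite-index bound for every admissible multiplier. A second, smaller inaccuracy: $\langle\lambda,[g_i(s),g_j(t)]\rangle$ is not equal to $\langle\lambda(t),[f_i,f_j](\tilde q(t))\rangle$ when $s\neq t$; the coincident-time brackets only emerge after the integration by parts in $W$, the residual terms being symmetric in $(i,j)$ or of lower order for oscillating variations. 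Both repairs are standard, and with them your argument becomes the textbook proof of the statement.
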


\subsection{Consequence of the optimality conditions. The 3-D case with 2-D control. }
In this paper we are going to investigate the local regularity of time-optimal trajectories for the following $3$-dimensional affine control system with a $2$-dimensional control:
\begin{equation}
\label{246}
\dot{q}=f_0(q)+u_1f_1(q)+u_2f_2(q),
\end{equation}
where space of control parameters $U$ is the $2$-dimensional disk.\\
By the Pontryagin Maximum Principle, every time-optimal trajectory of our system has an extremal that is a lift in the cotangent bundle $T^*M$. The extremal satisfies a Hamiltonian system, given by the Hamiltonian defined from the maximality condition.
\begin{notation}
\label{notaz}
Let us call $h_i(\lambda)=\left\langle \lambda,f_i(q) \right\rangle $, $f_{ij}(q)=[f_i,f_j](q),\ f_{ijk}(q)=[f_i,[f_j,f_k]](q)$, $h_{ij}(\lambda)=\left\langle \lambda,f_{ij}(q) \right\rangle $, and $h_{ijk}(\lambda)=\left\langle \lambda,f_{ijk}(q) \right\rangle$, with $\lambda\in T^*_{q}M$ and $i,j,k\in\{0,1,2\}$.
\end{notation}
\begin{definition}
The \emph{singular locus} $\Lambda \subseteq T^*M$, is defined as follows:
$$
\Lambda=\{\lambda\in T^*M : h_1(\lambda)=h_2(\lambda)=0\}.
$$
\end{definition}
The following proposition is an immediate Corollary of the Pontryagin Maximum Principle.

\begin{proposition}
\label{776}
If an extremal $\lambda(t),\ t\in[0,t_1]$, does not intersect the singular locus $\Lambda$, then
\begin{equation}
\label{time-opt.control}
\tilde{u}(t)=\left( \begin{array}{c}
\frac{h_1(\lambda(t))}{(h^2_2(\lambda(t))+h^2_2(\lambda(t)))^{1/2}}\\
 \frac{h_2(\lambda(t))}{(h^2_2(\lambda(t))+h^2_2(\lambda(t)))^{1/2}}
 \end{array}
 \right).
\end{equation}
Moreover, this extremal is a solutions of the Hamiltonian system defined by the Hamiltonian $\mathcal{H}(\lambda)=h_0(\lambda)+\sqrt{h^2_1(\lambda)+h^2_2(\lambda)}$. Thus, it is smooth.
\end{proposition}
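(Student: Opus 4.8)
The plan is to read the whole statement off the three conditions of the Pontryagin Maximum Principle, using the affine structure of \eqref{246} at every step. First I would expand the control-dependent Hamiltonian: since $f_u(q)=f_0(q)+u_1f_1(q)+u_2f_2(q)$, bilinearity of the pairing gives $\mathcal{H}_u(\lambda)=h_0(\lambda)+u_1h_1(\lambda)+u_2h_2(\lambda)$, with $h_i(\lambda)=\langle\lambda,f_i(q)\rangle$ as in Notation~\ref{notaz}. The drift $h_0$ does not depend on $u$, so the maximality condition (2) reduces to maximizing the linear functional $u\mapsto u_1h_1(\lambda)+u_2h_2(\lambda)$ over the unit disk $U$.

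Next I would use the hypothesis that the extremal avoids $\Lambda$, which by definition means $(h_1(\lambda(t)),h_2(\lambda(t)))\neq(0,0)$ for all $t$. Maximizing a nonzero linear functional over the unit disk has a unique maximizer, attained on the boundary circle at the unit vector pointing along $(h_1,h_2)$; this is a one-line Cauchy–Schwarz argument and yields exactly the formula \eqref{time-opt.control}, with maximal value $\sqrt{h_1^2(\lambda)+h_2^2(\lambda)}$. Substituting this maximizer $u^*(\lambda)=(h_1,h_2)/\sqrt{h_1^2+h_2^2}$ back into $\mathcal{H}_{\tilde u}$ produces the maximized Hamiltonian $\mathcal{H}(\lambda)=h_0(\lambda)+\sqrt{h_1^2(\lambda)+h_2^2(\lambda)}$.

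The one genuinely non-trivial step — and where I expect the real content to sit — is upgrading condition (1), which only says $\lambda(t)$ solves the Hamiltonian system for the \emph{frozen} optimal control, into the statement that $\lambda(t)$ is an integral curve of the autonomous field $\vec{\mathcal{H}}$. The key point is that off $\Lambda$ the maximizer $u^*(\lambda)$ is a smooth function of $\lambda$ (the normalization map is smooth away from the origin and the $h_i$ are smooth), so $\mathcal{H}(\lambda)=\mathcal{H}_{u^*(\lambda)}(\lambda)$ is smooth on the open set $T^*M\setminus\Lambda$. Differentiating in $\lambda$ by the chain rule gives a partial term $\partial_\lambda\mathcal{H}_u|_{u=u^*}$ plus a term $\partial_u\mathcal{H}_u|_{u=u^*}\cdot\partial_\lambda u^*$. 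Now $\partial_u\mathcal{H}_u=(h_1,h_2)$ points radially, i.e. is parallel to $u^*$, while differentiating $|u^*(\lambda)|^2\equiv 1$ shows $\partial_\lambda u^*$ is tangent to the unit circle, hence orthogonal to $u^*$; therefore the second term vanishes. This envelope (Danskin) computation gives $d_\lambda\mathcal{H}=d_\lambda\mathcal{H}_{\tilde u(t)}$ along the extremal, so $\vec{\mathcal{H}}(\lambda(t))=\vec{\mathcal{H}}_{\tilde u(t)}(\lambda(t))$ and $\lambda(t)$ solves $\dot\lambda=\vec{\mathcal{H}}(\lambda)$.

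Finally, smoothness comes for free: on $T^*M\setminus\Lambda$ the Hamiltonian $\mathcal{H}$, and hence the field $\vec{\mathcal{H}}$, is smooth, and by hypothesis the extremal is an integral curve of $\vec{\mathcal{H}}$ that never meets $\Lambda$. Solutions of a smooth autonomous ODE are smooth, so $\lambda(t)$ is smooth; projecting by $\pi$ gives a smooth $q(t)$, and since $\tilde u(t)=u^*(\lambda(t))$ is the composition of the smooth map $u^*$ with the smooth curve $\lambda(t)$, the optimal control is smooth as well. The maximization and the closing smoothness bookkeeping are routine; only the envelope argument identifying the two Hamiltonian vector fields needs care.
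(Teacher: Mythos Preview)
Your proposal is correct and follows exactly the route the paper intends: the paper states the proposition as ``an immediate Corollary of the Pontryagin Maximum Principle'' and gives no proof at all, so there is nothing to compare beyond noting that you have carefully filled in the details the paper omits. In particular, your envelope/Danskin computation showing $d_\lambda\mathcal{H}=d_\lambda\mathcal{H}_{\tilde u(t)}$ along the extremal is the genuine content behind the word ``immediate'', and it is done correctly.
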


\begin{definition}
We will call \emph{bang arc} any smooth arc of a time-optimal trajectory $q(t)$, whose correspondent time-optimal control $\tilde{u}$ lies in the boundary of the space of control parameters, $\tilde{u}(t)\in \partial U$.
\end{definition}
\begin{corollary}
\label{017}
An arc of a time-optimal trajectory, whose extremal is out of the singular locus, is a bang arc.
\end{corollary}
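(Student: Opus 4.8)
The plan is to derive the statement directly from Proposition~\ref{776}, the content being reduced to a single norm computation together with the smoothness already recorded there. First I would invoke the hypothesis: the extremal $\lambda(t)$ lies off the singular locus $\Lambda$, so $(h_1(\lambda(t)),h_2(\lambda(t)))\neq(0,0)$ along the whole arc and the denominator $\sqrt{h_1^2(\lambda(t))+h_2^2(\lambda(t))}$ appearing in~(\ref{time-opt.control}) never vanishes. Proposition~\ref{776} then applies verbatim and supplies both the closed form of $\tilde u(t)$ and the smoothness of the arc, the latter because the arc is the projection of a trajectory of the smooth Hamiltonian system generated by $\mathcal{H}=h_0+\sqrt{h_1^2+h_2^2}$.

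The core step is to evaluate $|\tilde u(t)|$. Squaring the two components of~(\ref{time-opt.control}) and adding them, the numerator $h_1^2(\lambda(t))+h_2^2(\lambda(t))$ cancels against the identical denominator, so $|\tilde u(t)|^2=1$ identically along the arc. Hence $\tilde u(t)\in\partial U$ for every $t$, which, combined with the smoothness above, is exactly the definition of a bang arc.

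For context I would also recall why the maximizer has this form: the maximality condition of the Pontryagin Maximum Principle requires maximizing $\mathcal{H}_u(\lambda(t))=h_0(\lambda(t))+u_1h_1(\lambda(t))+u_2h_2(\lambda(t))$ over $u\in U$, and since the drift term $h_0$ is independent of $u$ this amounts to maximizing the linear functional $u\mapsto u_1h_1+u_2h_2$ over the unit disk; off $\Lambda$ this functional is nonzero and thus attains its maximum at the unique boundary point in the direction of $(h_1,h_2)$. There is no genuine obstacle here—the result is a true corollary—so the only point requiring attention is to keep the argument inside the region where $\lambda(t)\notin\Lambda$, which is precisely what guarantees both that the normalization in~(\ref{time-opt.control}) is well defined and that the control is smooth rather than merely measurable.
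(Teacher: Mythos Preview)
Your argument is correct and follows the same route as the paper's own proof: invoke Proposition~\ref{776} for the explicit form~(\ref{time-opt.control}) and the smoothness of the extremal, then observe that the resulting control has unit norm and hence lies on $\partial U$. The paper states this more tersely, but the content is identical.
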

\begin{proof}
From Proposition \ref{776}, given an arc of a time-optimal trajectory $q(t)$, whose extremal $\lambda(t)$ does not intersect the singular locus, its control $\tilde{u}(t)$ satisfies the equation (\ref{time-opt.control}), as a consequence  the arc is smooth with respect to the time. Moreover the time-optimal control belongs to the boundary of $U$. Hence the arc of $q(t)$ that we are considering is a bang arc.
\end{proof}
From Corollary \ref{017} we already have an answer about the regularity of time-optimal trajectories:  every time-optimal trajectory, whose extremal lies out of the singular locus, is smooth.\\
However, we do not know what happen if an extremal touches the singular locus, optimal controls can be not always smooth, hence let us give the following definitions.
\begin{definition}
A \emph{switching} is a discontinuity of an optimal control.\\
Given $u(t)$ an optimal control, $\bar{t}$ is a \emph{switching time} if $u(t)$ is discontinuous at $\bar{t}$.\\ Moreover given $q_u(t)$ the admissible trajectory, $\bar{q}=q_u(\bar{t})$ is a \emph{switching point }if $\bar{t}$ is a switching time for $u(t)$.
\end{definition}

A concatenation of bang arcs is called \emph{bang-bang trajectory}.

An arc of an optimal trajectory that admits an extremal totally contained in the singular locus $\Lambda$, is called \emph{singular arc}.

\section{Statement of the result}
In the rest of the paper, we always assume that $\dim M=3$ and study the time-optimal problem for the system
\begin{equation}
\label{111}
\dot{q}=f_0(q)+u_1f_1(q)+u_2f_2(q),\quad (u_1,u_2)\in U,
\end{equation}
where $f_0$, $f_1$ and $f_2$ are smooth vector fields, $U=\{(u_1,u_2)\in\mathbb R^2: u_1^2+u_2^2\le1\}$; we also assume that $f_1$ and $f_2$ are everywhere linearly independent, and $f_{ij}=[f_i,f_j]$ with $i,j\in\{0,1,2\}$.

\label{Satement}
\begin{theorem}
\label{theorem.results}
Let $\bar{q}\in M$; if
\begin{equation}
\label{5431}
\mathrm{rank}\{f_1(\bar{q}),f_2(\bar{q}),f_{01}(\bar{q}), f_{02}(\bar{q}),f_{12}(\bar{q})\}=3,
\end{equation}
then there exists a neighbourhood $O_{\bar{q}}$ of $\bar{q}$ in $M$ such that 
any time-optimal trajectory contained in $O_{\bar{q}}$ is bang-bang, with not more than one switching.

\end{theorem}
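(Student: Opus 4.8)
The plan is to translate the rank hypothesis \eqref{5431} into a pointwise non-vanishing condition along the singular locus, use it together with the Goh condition to exclude singular arcs, and then analyse the planar curve $t\mapsto(h_1(\lambda(t)),h_2(\lambda(t)))$, showing that it can meet the origin at most once while the trajectory stays in a small $O_{\bar q}$.

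\emph{Step 1 (reformulation and exclusion of singular arcs).} Since $f_1,f_2$ are independent and $\dim M=3$, for $q$ near $\bar q$ the fibre $\Lambda\cap T^*_qM=\{\lambda:h_1(\lambda)=h_2(\lambda)=0\}$ is the one-dimensional annihilator of $\mathrm{span}\{f_1(q),f_2(q)\}$. Condition \eqref{5431} says exactly that $f_{01}(\bar q),f_{02}(\bar q),f_{12}(\bar q)$ are not all contained in this span, i.e. no nonzero $\lambda\in\Lambda\cap T^*_{\bar q}M$ annihilates all three; equivalently $(h_{01},h_{02},h_{12})(\lambda)\neq0$ there. Normalising the extremal and using continuity, I choose $O_{\bar q}$ so small that $h_{01}^2+h_{02}^2+h_{12}^2$ is bounded below by a positive constant on the normalised part of $\Lambda$ over $O_{\bar q}$. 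Along any extremal, differentiating with the Hamiltonian $\mathcal H_{\tilde u}=h_0+u_1h_1+u_2h_2$ and using $\{h_i,h_j\}=h_{ij}$ gives $\dot h_1=h_{01}-u_2h_{12}$ and $\dot h_2=h_{02}+u_1h_{12}$. On a singular arc the extremal stays in $\Lambda$, so $h_1\equiv h_2\equiv0$ and hence $\dot h_1=\dot h_2=0$; the control is interior, so the Goh condition (Theorem \ref{Goh.condition}) forces $h_{12}\equiv0$, and then the two relations give $h_{01}\equiv h_{02}\equiv0$ as well, contradicting the lower bound above. Thus no optimal trajectory in $O_{\bar q}$ has a singular arc, and by Corollary \ref{017} every such trajectory is bang-bang.

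\emph{Step 2 (planar switching dynamics).} On a bang arc the control is $\tilde u=(h_1,h_2)/\rho$, with $\rho:=\sqrt{h_1^2+h_2^2}$ (Proposition \ref{776}). Writing $(h_1,h_2)=\rho(\cos\theta,\sin\theta)$ and substituting, the $h_{12}$-terms cancel radially and
$$\dot\rho=h_{01}\cos\theta+h_{02}\sin\theta,\qquad \rho\,\dot\theta=h_{02}\cos\theta-h_{01}\sin\theta+h_{12}.$$
A switching is precisely a zero of $\rho$, where the extremal meets $\Lambda$ and $\tilde u$ jumps. At such a zero the approach is radial, so $h_{02}\cos\theta-h_{01}\sin\theta+h_{12}=0$; combined with $|(\cos\theta,\sin\theta)|=1$ this gives $\dot\rho^2+h_{12}^2=h_{01}^2+h_{02}^2$, whence the one-sided radial speeds are $\pm\sqrt{h_{01}^2+h_{02}^2-h_{12}^2}$ (negative on entry, positive on exit). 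Hence a switching can occur only where $h_{01}^2+h_{02}^2\ge h_{12}^2$, and the incoming and outgoing control directions are the two roots $\theta$ of $h_{02}\cos\theta-h_{01}\sin\theta=-h_{12}$ separated by the sign of $\dot\rho$; this is the quantitative picture sharpened in Theorems 3.1 and 3.5.

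\emph{Step 3 (at most one switching).} Where $h_{12}^2>h_{01}^2+h_{02}^2$ the denominator in $\dfrac{d\rho}{d\theta}=\rho\,\dfrac{h_{01}\cos\theta+h_{02}\sin\theta}{h_{02}\cos\theta-h_{01}\sin\theta+h_{12}}$ is bounded away from $0$, so $d(\log\rho)/d\theta$ is bounded and $\rho$ never reaches $0$: no switching. Where $h_{01}^2+h_{02}^2>h_{12}^2$, right after a switching $\rho$ increases with speed $\sqrt{h_{01}^2+h_{02}^2-h_{12}^2}$, bounded below on $O_{\bar q}$ by Step 1; a second zero would require $\dot\rho$ to change sign, i.e. $\tilde u$ to rotate by a definite amount, moving the base point a definite distance, which is impossible once $O_{\bar q}$ is small. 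The genuinely delicate points — the borderline set $h_{01}^2+h_{02}^2=h_{12}^2$ and the uniformity of the last estimate as $O_{\bar q}$ shrinks — are where I expect the main difficulty. I would dispose of them by a compactness/blow-up argument: if arbitrarily small neighbourhoods carried optimal trajectories with two switchings, the two switching covectors would converge to a single $\bar\lambda\in\Lambda\cap T^*_{\bar q}M$, and the intermediate arc, on which $\rho$ makes an excursion shrinking to $0$, would yield in the limit (after a time rescaling) a nontrivial singular arc through $\bar q$, contradicting Step 1. Rank $3$ therefore forces at most one switching.
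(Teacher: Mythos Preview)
Your outline captures the right decomposition, but there is a genuine gap precisely at the point you flag as delicate: the borderline set $h_{01}^2+h_{02}^2=h_{12}^2$.

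In Step~1 you assert that on a singular arc ``the control is interior'', and then invoke Goh. This is not justified. From $h_{01}-u_2h_{12}=0$ and $h_{02}+u_1h_{12}=0$ one gets (when $h_{12}\neq0$) $u=(-h_{02}/h_{12},\,h_{01}/h_{12})$, with $|u|^2=(h_{01}^2+h_{02}^2)/h_{12}^2$. If this ratio is $<1$ Goh applies and gives a contradiction; if it is $>1$ the control is inadmissible; but if it equals $1$ the singular control lies on $\partial U$, the hypothesis of Theorem~\ref{Goh.condition} fails, and nothing you have written excludes such arcs. Indeed they are \emph{not} excluded: the paper shows (Proposition~\ref{resulteq}) that in the borderline case an optimal extremal may lie in $\Lambda\cap\{h_{01}^2+h_{02}^2=h_{12}^2\}$, with smooth boundary control. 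The trajectory is then still bang, so the theorem survives, but not by your argument.

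This breaks your Step~3 compactness sketch twice over. First, the contradiction you aim for (``a nontrivial singular arc through $\bar q$, contradicting Step~1'') is no contradiction at the borderline. Second, nothing prevents the intermediate bang arc between two nearby switchings from collapsing to a point in the limit, so the limiting object need not be a nontrivial arc at all. The paper handles $r^2=h_{12}^2$ by an explicit estimate on the planar system after freezing the slow variables (Lemma~\ref{lemminoino}), showing that $\rho$ cannot return to zero; this is where the real work is, and your sketch does not supply it.

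There is a second, smaller gap in Step~3 for $r^2>h_{12}^2$. You argue that a second zero of $\rho$ would force $\tilde u$ to rotate by a definite amount, ``moving the base point a definite distance''. But $\dot\theta\sim 1/\rho$ near $\Lambda$, so the control can rotate through any fixed angle in arbitrarily short time while $q$ barely moves; a definite $\Delta\theta$ does not by itself give a definite $\Delta q$. The paper sidesteps this by rescaling $dt=\rho\,ds$, obtaining a smooth system with equilibria at $\bar\lambda_{\theta_\pm}$, and applying the Shoshitaishvili (reduction to center manifold) theorem to get a unique extremal through each point of $\Lambda$; uniqueness, not a displacement estimate, is what yields ``at most one switching''.
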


From Corollary \ref{017} we already know that every arc of a time-optimal trajectory, whose extremal lies out of $\Lambda$, is bang, and so smooth. \\
Thus, we are interested to study arcs of a time-optimal trajectories, whose extremals passes through $\Lambda$ or lies in $\Lambda$. \\
We are going to study directly the behaviour of extremals in the cotangent bundle in the neighbourhood of $\bar{\lambda}$, that is any lift of $\bar{q}$ in $\Lambda_{\bar{q}}\subseteq T^*_{\bar{q}}M$, not null.\\
Let us give an equivalent condition to (\ref{5431}) at the point $\bar{\lambda}$.
\begin{claim}
Given $\bar{\lambda}\in \Lambda_{\bar{q}}\subseteq T^*_{\bar{q}}M$, $\bar{\lambda}\neq 0$, the equation (\ref{5431}) is equivalent to
\begin{equation}
\label{54310}
h^2_{01}(\bar{\lambda})+h^2_{02}(\bar{\lambda})+h^2_{12}(\bar{\lambda})\neq 0,
\end{equation}
that does not depend on the choice of $\bar{\lambda}$.
\end{claim}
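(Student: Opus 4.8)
The plan is to reduce everything to a single dimension count, exploiting that under the hypotheses $\bar\lambda$ is determined up to a scalar. First I would record two elementary facts. Since $f_1(\bar q)$ and $f_2(\bar q)$ are linearly independent by assumption and $\dim M=3$, the rank in (\ref{5431}) is always either $2$ or $3$; and it equals $3$ precisely when at least one of the brackets $f_{01}(\bar q)$, $f_{02}(\bar q)$, $f_{12}(\bar q)$ fails to lie in the plane $V:=\mathrm{span}\{f_1(\bar q),f_2(\bar q)\}$.

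Next I would pin down $\bar\lambda$. By the definition of $\Lambda$, a covector $\bar\lambda\in\Lambda_{\bar q}$ satisfies $\langle\bar\lambda,f_1(\bar q)\rangle=\langle\bar\lambda,f_2(\bar q)\rangle=0$, i.e. $\bar\lambda$ annihilates $V$. Since $\dim V=2$ and $\dim T^*_{\bar q}M=3$, the annihilator of $V$ is one-dimensional; hence a nonzero $\bar\lambda\in\Lambda_{\bar q}$ is unique up to a nonzero scalar, and in fact $\ker\bar\lambda=V$ exactly.

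The crucial observation is then the following: for any pair $i,j$ one has $h_{ij}(\bar\lambda)=\langle\bar\lambda,f_{ij}(\bar q)\rangle=0$ if and only if $f_{ij}(\bar q)\in\ker\bar\lambda=V$. Combining this with the first step yields the equivalence at once: the rank is $3$ iff some $f_{ij}(\bar q)\notin V$ iff some $h_{ij}(\bar\lambda)\neq0$ iff $h^2_{01}(\bar\lambda)+h^2_{02}(\bar\lambda)+h^2_{12}(\bar\lambda)\neq0$, which is exactly (\ref{54310}).

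Finally, for the claimed independence of the choice of $\bar\lambda$: any other nonzero element of $\Lambda_{\bar q}$ equals $c\bar\lambda$ for some $c\neq0$, so each $h_{ij}$ is multiplied by $c$ and the left-hand side of (\ref{54310}) is multiplied by $c^2>0$; the truth of the condition ``$\neq0$'' is therefore unaffected. The argument is pure linear algebra and I do not anticipate a genuine obstacle; the only point demanding care is the identification $\ker\bar\lambda=V$, which is precisely where the hypotheses $\dim M=3$ and the independence of $f_1,f_2$ enter, and without which the equivalence would break down.
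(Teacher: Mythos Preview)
Your proof is correct and follows the same idea as the paper: since $\bar\lambda$ annihilates the plane $V=\mathrm{span}\{f_1(\bar q),f_2(\bar q)\}$ and $\dim M=3$, one has $\ker\bar\lambda=V$, so the rank condition fails exactly when all three brackets lie in $V$, i.e.\ when all three $h_{ij}(\bar\lambda)$ vanish. The paper compresses this into a single sentence, while you spell out the dimension count and the independence from the choice of $\bar\lambda$ explicitly; the substance is identical.
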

\begin{proof}
Since by construction $\bar{\lambda}$ is orthogonal to $f_1(\bar{q})$ and $f_2(\bar{q})$, (\ref{5431}) will be true if and only if the valuers $h_{01}(\bar{\lambda})$ $h_{02}(\bar{\lambda})$ and $h_{12}(\bar{\lambda})$ can not be all null.
\end{proof}
In this paper we are going to present exactly in which cases there could appear switchings, with respect to the choice of the triples $(f_0,f_1,f_2)\in V^\infty_{3}(M)$.\\
Let us give the following notation.
\begin{notation}
\label{246} Chosen $\bar{\lambda}\in\Lambda|_{\bar q}$, such that $\bar{\lambda}=f_1(\bar q)\times f_2(\bar q)  $, we introduce the following abbreviated notations: $r:=(h^2_{01}(\bar{\lambda})+h^2_{02}(\bar{\lambda}))^{1/2},\
h_{12}:=h_{12}(\bar{\lambda})$.
\end{notation}
The fist step is to investigate if our system admits singular arcs.
\begin{proposition}
\label{809}
Assuming condition (\ref{54310}), if $r^2\neq h^2_{12}$ there are no optimal extremals in $O_{\bar{\lambda}}$ that lie in the singular locus $\Lambda$ for a time interval. On the other hand, if $r^2= h^2_{12}$ there might be arcs of  optimal extremal contained in $\Lambda$.
\end{proposition}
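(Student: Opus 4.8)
The plan is to characterise the singular arcs through the vanishing of the switching functions $h_1,h_2$ along the extremal flow, and then to use the Goh condition to separate the interior-control régime from the boundary-control one.

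First I would suppose that $\lambda(t)$ is an optimal extremal lying in $\Lambda$ for $t$ in a nondegenerate interval, so that $h_1(\lambda(t))\equiv h_2(\lambda(t))\equiv 0$. The extremal solves the Hamiltonian system of $\mathcal H_u=h_0+u_1h_1+u_2h_2$ for the (a priori unknown) singular control $u(t)$. Differentiating $h_1,h_2$ along the flow by Remark \ref{derivPoiss}, using the identity $\{h_i,h_j\}=h_{ij}$ of Remark \ref{poisson,lie} together with the skew-symmetry of the Poisson bracket, I would obtain
\begin{equation*}
\tfrac{d}{dt}h_1=\{\mathcal H_u,h_1\}=h_{01}-u_2h_{12},\qquad
\tfrac{d}{dt}h_2=\{\mathcal H_u,h_2\}=h_{02}+u_1h_{12}.
\end{equation*}
Since $h_1\equiv h_2\equiv 0$ forces both derivatives to vanish, the singular control must satisfy $u_2h_{12}=h_{01}$ and $u_1h_{12}=-h_{02}$.

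The heart of the proof is a case analysis on these relations. If $h_{12}=0$ they force $h_{01}=h_{02}=0$, hence $r^2=h_{12}^2=0$, contradicting \eqref{54310}; so under \eqref{54310} one must have $h_{12}\neq0$, and then $u_1=-h_{02}/h_{12}$, $u_2=h_{01}/h_{12}$, whence $u_1^2+u_2^2=r^2/h_{12}^2$. Admissibility, $u_1^2+u_2^2\le1$, thus reads $r^2\le h_{12}^2$. If $r^2>h_{12}^2$ no admissible control can keep $\lambda(t)$ in $\Lambda$, so no singular arc exists. If $r^2<h_{12}^2$ the control lies in $\operatorname{int}U$, and the Goh condition (Theorem \ref{Goh.condition}) makes $h_{12}(\lambda(t))=0$ necessary for optimality, contradicting $h_{12}\neq0$; hence no optimal singular arc exists here either. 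Because $r^2\neq h_{12}^2$ and \eqref{54310} are open conditions in $\lambda$, they persist on a whole neighbourhood $O_{\bar\lambda}$, which promotes the pointwise conclusion to the statement on $O_{\bar\lambda}$. Finally, when $r^2=h_{12}^2$ (so $h_{12}\neq0$ under \eqref{54310}) the control satisfies $u_1^2+u_2^2=1$, i.e. $u\in\partial U$; the hypothesis of the Goh condition is then not met, no contradiction arises, and a singular arc may indeed occur.

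The main obstacle I anticipate is conceptual rather than computational: one must be careful to invoke the Goh condition \emph{only} in the interior-control subcase, and to recognise that the admissibility threshold $u_1^2+u_2^2=1$ corresponds exactly to $r^2=h_{12}^2$. This identification of the boundary locus with $r^2=h_{12}^2$ is what makes the whole dichotomy work; correctly tracking the signs produced by the skew-symmetry of the Poisson bracket in the two derivatives above is the only delicate point of the calculation.
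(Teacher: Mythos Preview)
Your proposal is correct and follows essentially the same approach as the paper: differentiate $h_1,h_2$ along the extremal to obtain $h_{01}-u_2h_{12}=0$ and $h_{02}+u_1h_{12}=0$, deduce $h_{12}\neq0$ from \eqref{54310}, solve for the singular control to get $\lVert u\rVert^2=r^2/h_{12}^2$, and then split into the admissibility contradiction ($r^2>h_{12}^2$), the Goh contradiction ($r^2<h_{12}^2$), and the undetermined boundary case ($r^2=h_{12}^2$). Your justification of $h_{12}\neq0$ and your remark on the openness of the conditions are in fact slightly more explicit than the paper's own argument.
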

Thanks to Proposition \ref{809}, if $r^2\neq h^2_{12}$ every optimal extremal could either remain out of the singular locus or intersect it transversally. Consequently, in a neighbourhood of $\bar{\lambda}$ we are allowed to study the solutions of the Hamiltonian system, defined by $\mathcal{H}(\lambda)=h_{0}(\lambda)+\sqrt{h^2_{1}(\lambda)+h^2_{2}(\lambda)}$, that has a discontinuous right-hand side at $\bar{\lambda}$.\\
With this approach we proved the following result.
\begin{theorem}
\label{resultneq}
Assume that condition (\ref{54310}) is satisfied, and suppose that $r^2\neq h^2_{12}$.\\
If
\begin{equation}
\label{54310>}
r^2>h^2_{12},
\end{equation}
then there exist a neighborhood $O_{\bar\lambda}\subset T^*M$ and an interval $(\alpha,\beta),\ \alpha<0<\beta,$ such that for any $z\in O_{\bar\lambda}$ there exists a unique extremal $t\mapsto\lambda(t;z)$ with the initial condition $\lambda(0;z)=z$ defined on the interval $t\in(\alpha+\hat{t},\beta+\hat{t})$, with $\hat{t}\in(-\beta,-\alpha)$. Moreover, $\lambda(t;z)$ continuously depends on $(t,z)\in(\alpha,\beta)\times O_{\lambda}$ and every extremal in $O_{\bar{\lambda}}$ that passes through the singular locus is piece-wise smooth with only one switching. Besides that, we have:
\begin{equation}
\label{jumpo}
u(\bar{t}\pm 0)=\frac 1{r^2}\left( -h_{02}h_{12}\pm h_{01}(r^2-h^2_{12})^{\frac 12},h_{01}h_{12}\pm  h_{02}(r^2-h^2_{12})^{\frac 12}\right),
\end{equation}
where $u$ is the control correspondent to the extremal that passes through $\bar\lambda$, and $\bar t$ is its switching time.
If
\begin{equation}
\label{54310<}
r^2<h^2_{12},
\end{equation}
then there exists a neighborhood $O_{\bar\lambda}\subset T^*M$ such that every optimal extremal does not intersect singular locus in $O_{\bar\lambda}$; all close to $\bar q$ optimal trajectories are smooth bang arcs.
\end{theorem}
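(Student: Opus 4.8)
The plan is to track the pair $(h_1,h_2)$ along the flow of $\mathcal H=h_0+\rho$, where I abbreviate $\rho:=\sqrt{h_1^2+h_2^2}$, and to resolve the singularity of this flow on the singular locus $\Lambda=\{\rho=0\}$ by a time reparametrization. By Proposition \ref{776} the control off $\Lambda$ is $u=(h_1,h_2)/\rho$, so using Remark \ref{derivPoiss}, Remark \ref{poisson,lie} and the Leibniz rule for Poisson brackets I would first compute
\[
\dot h_1=\{\mathcal H,h_1\}=h_{01}-\tfrac{h_2}{\rho}h_{12},\qquad
\dot h_2=\{\mathcal H,h_2\}=h_{02}+\tfrac{h_1}{\rho}h_{12}.
\]
Passing to polar coordinates $h_1=\rho\cos\theta,\ h_2=\rho\sin\theta$ (so that $u=(\cos\theta,\sin\theta)$), these combine, after an exact cancellation of the cross terms, into
\[
\dot\rho=h_{01}\cos\theta+h_{02}\sin\theta,\qquad
\rho\,\dot\theta=h_{12}+h_{02}\cos\theta-h_{01}\sin\theta .
\]

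Next I would freeze the coefficients $h_{01},h_{02},h_{12}$ at their values at $\bar\lambda$ (the general case being a smooth perturbation of this model, treated at the end) and write $(h_{01},h_{02})=r(\cos\phi,\sin\phi)$. With $\psi:=\theta-\phi$ the system reduces to
\[
\dot\rho=r\cos\psi,\qquad \rho\,\dot\psi=h_{12}-r\sin\psi .
\]
The decisive step is the reparametrization $d\tau=dt/\rho$, after which
\[
\frac{d\rho}{d\tau}=\rho\,r\cos\psi,\qquad
\frac{d\psi}{d\tau}=h_{12}-r\sin\psi
\]
is a \emph{smooth} planar field for which the line $\{\rho=0\}$ (that is, $\Lambda$) is invariant, and reaching $\Lambda$ in finite time $t$ corresponds to $\tau\to+\infty$. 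This is the regularization that makes Proposition \ref{809} effective.

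Now a phase-plane analysis on $\{\rho=0\}$ separates the two cases. If $r^2>h_{12}^2$ the equation $\sin\psi=h_{12}/r$ has two roots $\psi_1^*=\arcsin(h_{12}/r)$ and $\psi_2^*=\pi-\psi_1^*$, giving two equilibria of the reparametrized field on $\Lambda$; linearization shows each is a saddle, with $(0,\psi_2^*)$ stable in the $\rho$-direction (eigenvalue $r\cos\psi_2^*<0$) and $(0,\psi_1^*)$ unstable in it. The incoming extremal is then the branch of the stable manifold of $(0,\psi_2^*)$ in $\rho>0$, the outgoing extremal is the branch of the unstable manifold of $(0,\psi_1^*)$ in $\rho>0$, and the two are joined by the heteroclinic orbit $\psi_2^*\to\psi_1^*$ on $\{\rho=0\}$, which is traversed in zero real time since there $dt=\rho\,d\tau=0$. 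This yields exactly one switching, with left and right control angles $\theta_-=\psi_2^*+\phi$ and $\theta_+=\psi_1^*+\phi$; inserting $\cos\psi_1^*=+\sqrt{r^2-h_{12}^2}/r$ and $\cos\psi_2^*=-\sqrt{r^2-h_{12}^2}/r$ into $u=(\cos\theta,\sin\theta)$ recovers formula (\ref{jumpo}). Existence, uniqueness and continuous dependence of $\lambda(\cdot;z)$ follow from smoothness of the reparametrized field and the hyperbolicity of the two saddles. If instead $r^2<h_{12}^2$, then $h_{12}-r\sin\psi$ never vanishes, so there is no equilibrium on $\Lambda$ and $\psi$ rotates monotonically; integrating $d(\ln\rho)/d\tau=r\cos\psi$ against one full turn of $\psi$ gives $\int_0^{2\pi}\frac{r\cos\psi}{h_{12}-r\sin\psi}\,d\psi=0$, so $\ln\rho$ has zero net drift, $\rho$ stays bounded away from $0$, and no extremal reaches $\Lambda$. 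Hence near $\bar q$ every optimal extremal stays off the singular locus and is a smooth bang arc by Proposition \ref{776} and Corollary \ref{017}.

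The hard part will be passing from the frozen model to the true system and thereby obtaining an honest neighbourhood $O_{\bar\lambda}$. I expect the two saddles to persist as a normally hyperbolic pair depending smoothly on the slowly varying coefficients $h_{01},h_{02},h_{12}$, so that their local stable and unstable manifolds, and hence the glued extremal $\lambda(t;z)$, vary continuously with $z$; making this uniform — and in particular establishing the $C^0$ dependence of $\lambda(t;z)$ \emph{across} the switching time $\bar t(z)$ — is the technical heart of the argument. A second delicate point is the optimality assertion when $r^2<h_{12}^2$: one must rule out the would-be extremal issuing from $\bar\lambda$ as non-optimal, so that only the bang arcs lying off $\Lambda$ survive.
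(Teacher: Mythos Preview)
Your approach is essentially the paper's: pass to polar coordinates $(h_1,h_2)=(\rho\cos\theta,\rho\sin\theta)$, desingularize by the time change $d\tau=dt/\rho$, and read off the switching structure from the equilibria of the resulting smooth field on the invariant set $\{\rho=0\}$. The only differences are in the tools invoked at two spots: where you appeal directly to hyperbolic-saddle theory (and later to normal hyperbolicity for the perturbation), the paper applies Shoshitaishvili's reduction theorem to the equilibria $\bar\lambda_{\theta_\pm}$ and then checks finite $t$-time arrival by the estimate $\rho(s)<\rho(s_0)e^{c_1(s-s_0)}$; and for $r^2<h_{12}^2$ the paper proves a Gronwall-type lower bound $\rho(t)\ge c\,e^{-\alpha t}\rho(0)$ in the \emph{unfrozen} system (its Lemma~\ref{lemma1}) rather than your zero-drift integral in the frozen model --- otherwise the arguments, including the derivation of~(\ref{jumpo}) and the identification of the remaining technical work ($C^0$ dependence across the switch, handled in the paper's Proposition~\ref{prop}, and the appeal to Proposition~\ref{809}/Goh to exclude singular arcs), coincide.
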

\begin{remark}
We would like to stress the fact that formula (\ref{jumpo}) explicitely describes the jump of the time-optimal control at the switching point in terms of Lie brackets relations. \\
If the value $h_{12}$ equals zero at the jump point, then the control reachs the antipodal point of the boundary of the disk. This happen at points where $f_1$ $f_2$ and $f_{12}$ are linearly dependent.\\
Moreover, if the inequality $r^2>h^2_{12}$ is close to be an equality the jump will be smaller and smaller.
\end{remark}
\begin{remark}
 In general, the flow of switching extremals from Theorem~3.6 is not locally Lipschitz with respect to the initial value. A straightforward calculation shows that it is not locally Lipschitz already in the following simple example:
$$
\dot{x}=\left(\begin{array}{c}
0\\0\\\alpha x_1
\end{array}\right)+u_1\left(\begin{array}{c}
1\\0\\0
\end{array}\right)+u_2\left(\begin{array}{c}
0\\1\\ x_1
\end{array}\right)
$$
with $\alpha>1$.
\end{remark}

Since the Pontryagin Maximum Principle is a necessary but not sufficient condition of optimality, even if we have found extremals that passes through the singular locus, we cannot guaranty that they are all optimal, namely that their projections in $M$ are time-optimal trajectory. In some cases they are certainly optimal, in particular, for linear system with an equilibrium target, where to be an extremal is sufficient for optimality. We plan to study general case in a forthcoming paper.\\

In the limit case $r^2=h^2_{12}$ we have the following result:
\begin{proposition}
\label{resulteq}
If
\begin{equation}
\label{54310=}
r^2 = h^2_{12},
\end{equation}
there exists a nieghborhood of $\bar{q}$ such that any time-optimal trajectory that contains $\bar{q}$ and is contained in the neighborhood is a bang arc. The correspondent extremal either remains out of the singular locus $\Lambda$, or lies in
\begin{equation}
\label{020}
\Lambda\cap \{\lambda\,|\, h^2_{01}(\lambda)+h^2_{02}(\lambda)=h^2_{12}(\lambda)\}.
\end{equation}
Anyway, the correspondent optimal control will be smooth without any switching, taking values on the boundary of $U$, in both cases.
\end{proposition}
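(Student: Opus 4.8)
The plan is to translate the problem into the dynamics of the pair $\Phi(t)=(h_1(\lambda(t)),h_2(\lambda(t)))$ along an extremal and to read off the behaviour near the singular locus $\Lambda=\{\Phi=0\}$ from the structure of this planar vector field. Off $\Lambda$ the control is $u=\Phi/|\Phi|$ and, differentiating along the maximized flow $\mathcal H=h_0+\sqrt{h_1^2+h_2^2}$ (Remark~\ref{derivPoiss} together with the Poisson--Lie correspondence of Remark~\ref{poisson,lie}), one finds $\dot h_1=h_{01}-u_2h_{12}$ and $\dot h_2=h_{02}+u_1h_{12}$, i.e.\ $\dot\Phi=b+h_{12}Ju$ with $b=(h_{01},h_{02})$ and $J$ the rotation by $\pi/2$. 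Writing $\Phi=\rho(\cos\theta,\sin\theta)$ I would record the two scalar equations $\dot\rho=\langle b,u\rangle$ and $\rho\dot\theta=g(\theta):=h_{12}+\langle b,Ju\rangle$, together with the identity $g'(\theta)=-\langle b,u\rangle=-\dot\rho$ (derivative at frozen brackets). All brackets are evaluated along $\lambda(t)$, so $b$ and $h_{12}$ are slowly varying and, to leading order, equal their values at $\bar\lambda$.

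The first step is the local normal form at $\bar\lambda$. Condition (\ref{54310}) together with (\ref{54310=}) forces $|b(\bar\lambda)|=r=|h_{12}(\bar\lambda)|>0$, so the circle function $\theta\mapsto g(\theta)$ has amplitude exactly equal to its offset: it has a unique zero $\theta_*$, and that zero is double, $g(\theta_*)=g'(\theta_*)=0$. Geometrically the planar velocity $b+h_{12}Ju(\theta_*)$ vanishes at the origin in the direction $\theta_*$, in sharp contrast with the situation $r^2\neq h_{12}^2$ of Theorem~\ref{resultneq}, where $g$ has either two simple zeros, each with $\dot\rho=-g'\neq0$ producing the genuine jump (\ref{jumpo}), or no zero at all.

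The second step, which I expect to be the main obstacle, is to exclude a finite-time crossing of $\Lambda$ by an extremal lying off $\Lambda$. For frozen brackets the mechanism is transparent: the identity $g'=-\dot\rho$ integrates to $\frac{d}{d\theta}\ln(\rho|g|)=0$, so $\rho|g|$ is constant; since $|g|$ is bounded (its maximum is $2|h_{12}|$ in the limit case), any extremal with $\rho|g|$ bounded away from $0$ has $\rho$ bounded away from $0$ and never reaches $\Lambda$, while the only way to approach $\Lambda$ is with $\rho|g|\to0$, which forces the control direction to the zero $\theta_*$; but there $\dot\rho=-g'(\theta_*)=0$ because the zero is double, so the approach is merely asymptotic and consumes infinite time. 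Making this rigorous is delicate precisely because $r^2=h_{12}^2$ is a borderline codimension-one condition: at nearby base points $g$ acquires two simple zeros or loses its zero, so the double-zero structure is not robust and the brackets cannot simply be frozen. I would therefore track the genuine quantity $Q:=\rho\,g=h_{12}\rho+h_{02}h_1-h_{01}h_2$, whose total derivative along the true extremal flow is an explicitly computable error that is $O(\rho)$ near $\bar\lambda$; a Gr\"onwall-type comparison should then keep $\rho|g|$ bounded below along every extremal not issuing from the degenerate direction, which is exactly the obstruction to reaching $\Lambda$. This part runs parallel to the normal-form analysis in the proof of Theorem~\ref{resultneq} and should be adaptable from it.

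With crossing excluded, the dichotomy anticipated by Proposition~\ref{809} becomes clean: an optimal extremal through $\bar q$ either stays off $\Lambda$ on a whole subinterval around the relevant time, or lies in $\Lambda$ on an interval. In the first case Corollary~\ref{017} yields a smooth bang arc with control on $\partial U$ and no switching. In the second case I differentiate $h_1\equiv h_2\equiv0$ to get $h_{01}-u_2h_{12}=0$ and $h_{02}+u_1h_{12}=0$; since $h_{12}(\bar\lambda)\neq0$ these pin the control to the smooth value $u=(-h_{02},h_{01})/h_{12}$, and the Goh condition (Theorem~\ref{Goh.condition}), which would force $h_{12}=0$ were the control interior on a subinterval, instead gives $|u|=1$, i.e.\ $h_{01}^2+h_{02}^2=h_{12}^2$. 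Thus the singular extremal lies in the set (\ref{020}), and its control is smooth, lies on $\partial U$, and has no switching. In either alternative the trajectory through $\bar q$ is a bang arc without switching, which is the assertion of the proposition.
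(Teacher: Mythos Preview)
Your setup and the treatment of the singular-arc case are correct and match the paper: the polar form $\dot\rho=\langle b,u\rangle$, $\rho\dot\theta=g(\theta)=h_{12}+\langle b,Ju\rangle$, the observation that $r^2=h_{12}^2$ forces a unique \emph{double} zero $\theta_*$ of $g$, and the derivation of $u=(-h_{02},h_{01})/h_{12}$ with $|u|=1$ via Goh are all as in the paper (cf.\ the proof of Proposition~\ref{809} and the first paragraph of the paper's proof of Proposition~\ref{resulteq}).

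The gap is in the ``second step''. Your frozen-bracket identity $\tfrac{d}{dt}(\rho g)=0$ is a nice observation, but the proposed Gr\"onwall extension does not go through. With the true flow one gets $\dot Q=\rho\,\dot h_{12}+h_1\dot h_{02}-h_2\dot h_{01}=O(\rho)$, exactly as you say; however $Q=\rho g$ with $g(\theta)\sim c(\theta-\theta_*)^2$, so $|\dot Q|/|Q|=O(1/g)$ blows up as $\theta\to\theta_*$, and Gr\"onwall gives no lower bound on $Q$. Worse, in the unfrozen model the extra drift in $\dot\theta$ (the ``$+1$'' term in the paper's worst-case system $\dot\theta=(1-\cos\theta)/\rho+1$) pushes $\theta$ through $\theta_*$ in finite time with $\rho>0$, so $Q$ actually vanishes there; controlling $Q$ thus does not control $\rho$ at all near the degenerate direction. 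Your appeal to the normal-form analysis of Theorem~\ref{resultneq} is also off target: there the equilibria $\bar\lambda_{\theta_\pm}$ have $h_{0\theta}|_{\bar\lambda_{\theta_\pm}}=\pm\sqrt{r^2-h_{12}^2}\ne0$, giving a hyperbolic $(\rho,\theta)$-pair to which Shoshitaishvili applies, whereas here $h_{0\theta}|_{\bar\lambda_{\theta_*}}=0$ and both relevant eigenvalues vanish, so that machinery yields nothing. The paper handles this degenerate neighbourhood by a different, sharper device: it passes to the scalar equation $\rho'(\theta)=\dfrac{-\rho(\sin\theta+\rho)}{1-\cos\theta+\rho}$, rescales by the initial height $s=\rho_0$ via $\theta=st$, $\rho=s+s^2x(t)$, and compares the forward and backward branches to show $\rho(-\theta)<\rho(\eta\theta)$ once $\theta$ has left a fixed window (Lemma~\ref{lemminoino}); combined with Lemma~\ref{lemma1} away from $\theta_*$ this excludes any finite-time approach to $\Lambda$. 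That rescaling/comparison step is the missing idea in your argument.
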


\begin{remark}
One can notice that the case, in which an extremal $\lambda(t)$ lies in (\ref{020}) for a time interval, is very rare. Indeed, necessarily along the curve it holds conditions $(P_k)$ on $(f_0,f_1,f_2)$, that come from the following equalities
$$\frac{d^k}{dt^k}\left( h^2_{01}(\lambda(t))+h^2_{02}(\lambda(t))-h^2_{12}(\lambda(t)) \right)=0, \quad k\in\mathbb{N},
$$
and it is easy to see that at least conditions $(P_0)$ $(P_1)$ and $(P_2)$ are distinct and independent.
\end{remark}

\section{Proof}
In this Section we are going to present at first the proof of Theorem \ref{resultneq}, secondly we are going to prove Proposition \ref{809}, and finally Proposition \ref{resulteq}. All together, these statements contain Theorem~3.1.
\subsection{Proof of Theorem \ref{resultneq}} Let us present the Blow-up technique, in order to analyse the discontinuous right-hand side Hamiltonian system, defined by
\begin{equation}
\label{111098}
\mathcal{H}(\lambda)=h_0(\lambda) +  \sqrt{h_1^2(\lambda)+h_2^2(\lambda)},
\end{equation}in a neighbourhood $O_{\bar{\lambda}}$ of $\bar{\lambda}$. Secondly, we are going to show the proof of the Theorem if $r^2< h^2_{12}$, and finally we prove it if  $r^2>h^2_{12}$.
\subsubsection{Blow-up technique}In view of the fact that this is a local problem in $O_{\bar{\lambda}}\subseteq T^*M$, it is very natural consider directly its local coordinates $(\xi,x)\in \mathbb{R}^{3*}\times \mathbb{R}^3$, such that $\bar{\lambda}$ corresponds to $(\bar{\xi}, \bar{x})$ with $\bar{x}=0$. Hence,
\begin{equation}
\label{1110988}
\mathcal{H}(\xi,x)=h_0(\xi,x) +  \sqrt{h_1^2(\xi,x)+h_2^2(\xi,x)}.
\end{equation}
Since $f_1$ and $f_2$ are linearly independent everywhere, we can define the never null vector field $f_3$, such that $$f_3(x)=f_1(x)\times f_2(x),$$with the correspondent $h_3(\xi,x)=\left\langle \xi,f_3(x) \right\rangle $. Therefore, we are allowed to consider the following smooth change of variables
$$
\Phi:\, (\xi,x)\longrightarrow  ((h_1,h_2,h_{3}),x),
$$
so the singular locus becomes the subspace $$\Lambda=\{((h_1,h_2,h_{3}),x) \, |\, h_1=h_2=0 \}.$$
\begin{notation}In order not to do notations even more complicated, we call $\lambda$ any point defined with respect to the new coordinates $((h_1,h_2,h_{3}),x)$, and $\bar{\lambda}$ what corresponds to the singular point that we fixed at Notation \ref{246}.
\end{notation}
\begin{definition}
The \emph{blow-up} technique is defined in the following way:\\
We make a change of variables: $(h_1,h_2)=(\rho\cos\theta,\rho\sin\theta)$. Instead of considering the components $h_1$ and $h_2$ of the singular point $\bar{\lambda}$ in $\Lambda$, as the point $(0,0)$ in the 2-dimensional plane, we will consider it as a circle $\{\rho=0\}$, and we denote every point of this circle $\bar{\lambda}_{\theta}$, with respect to the angle.
\end{definition}
\begin{center}
\begin{pspicture}(-0.82,-1.6812989)(6.82,1.6812989)
\rput[bl](0.0,-1.4653614){$\textcolor{black}{\bar{\lambda}}$}
\rput[bl](5.9,-0.07463871){$\textcolor{black}{\theta}$}
\rput[bl](2.4533334,-0.10536131){\large{Blow - up}}
\rput[bl](4.24,-1.6053613){$\textcolor{black}{\bar{\lambda}}$}
\rput[bl](4.4866667,0.37463867){$\textcolor{black}{\bar{\lambda}_{\theta}}$}
\psdots[linecolor=black, dotsize=0.18](0.40666667,-0.45869467)
\psline[linecolor=black, linewidth=0.04, linestyle=dashed, dash=0.17638889cm 0.10583334cm, arrowsize=0.05291667cm 2.0,arrowlength=1.4,arrowinset=0.0]{->}(1.96,-0.45869467)(4.32,-0.45869467)
\pscircle[linecolor=black, linewidth=0.04, dimen=outer](5.653333,-0.4986947){0.39333335}
\pswedge[linecolor=black, linewidth=0.04, dimen=outer](5.6466665,-0.4986947){0.14666666}{0.0}{130.5483}
\psline[linecolor=black, linewidth=0.04](5.66,-0.52536136)(5.4066668,-0.23202801)
%\psline[linecolor=black, linewidth=0.04](1.96,-0.4986947)(4.32,-0.4986947)
\psdots[linecolor=black, dotsize=0.18](5.42,-0.20536135)
\psline[linecolor=black, linewidth=0.04, linestyle=dotted, dotsep=0.10583334cm, arrowsize=0.05291667cm 2.0,arrowlength=1.4,arrowinset=0.0]{->}(0.15333334,-1.0453613)(0.32666665,-0.632028)
\psline[linecolor=black, linewidth=0.04, linestyle=dotted, dotsep=0.10583334cm, arrowsize=0.05291667cm 2.0,arrowlength=1.4,arrowinset=0.0]{->}(4.78,-1.2453613)(5.1,-0.8586947)
\psline[linecolor=black, linewidth=0.04, linestyle=dotted, dotsep=0.10583334cm, arrowsize=0.05291667cm 2.0,arrowlength=1.4,arrowinset=0.0]{->}(4.7,0.35463864)(5.2066665,-0.018694684)
\rput[bl](-1.36,1.0146386){\Large{$\textcolor{black}{((h_1,h_2,h_3), x)}$}}
\psline[linecolor=black, linewidth=0.037, arrowsize=0.05291667cm 2.0,arrowlength=1.4,arrowinset=0.0]{->}(1.96,1.2146386)(4.32,1.2146386)
\rput[bl](4.4866667,1.0146386){\Large{$\textcolor{black}{((\rho,\theta, h_3),x)}$}}
\end{pspicture}

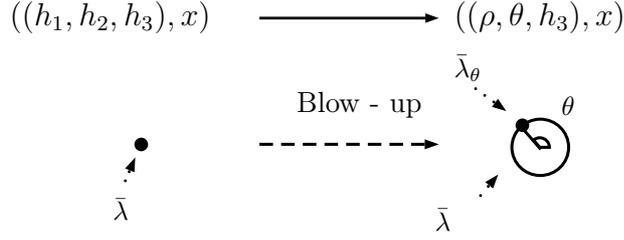
\captionof{figure}{Blow-up technique}
\end{center}
\bigskip

In order to write explicitly the Hamiltonian system of (\ref{111098}) out of $\Lambda$ with this new formulation, let us notice the following aspects.

As it is already know from Proposition \ref{776}, every optimal control $\tilde{u}$ correspondent to an extremal $\lambda(t)$ that lies out of $\Lambda$ satisfies formula (\ref{time-opt.control}), therefore in this new notation it holds $$\tilde{u}(t)=(\cos(\theta(t)),\sin(\theta(t))),$$
where $\theta(t)$ is the $\theta$-component of $\lambda(t)$.\\
Consequently, it is useful denote
$$ f_{\theta}(x)=\cos(\theta) f_1(x)+\sin(\theta) f_2(x) $$
and $h_{\theta}(\lambda)=\left\langle  \xi, f_{\theta}(x) \right\rangle $.\\
Finally we can see that $$h_{\theta}(\lambda)=\sqrt{h^2_1+h^2_2},$$ namely $h_{\theta}(\lambda)=\rho$, because $h_{\theta}(\lambda)=\cos(\theta)h_1+\sin(\theta)h_2$, $\cos(\theta)=\frac{h_1}{\sqrt{h^2_1+h^2_2}}$ and $\sin(\theta)=\frac{h_2}{\sqrt{h^2_1+h^2_2}}$.\\
Hence, with this new formulation the maximised Hamiltonian becomes
\begin{equation}
\label{111099888}
\mathcal{H}(\lambda)=h_0(\lambda)+  h_{\theta}(\lambda),
\end{equation}
and, thanks to Remarks \ref{derivPoiss} and \ref{poisson,lie}, the Hamiltonian system has the following form:
\begin{equation}
\label{11100}
\left\lbrace \begin{array}{l}
\dot{x}=f_0(x)+f_{\theta}(x)\\
\dot{\rho} = h_{0\theta}(\lambda)\\
\dot{\theta} = \frac{1}{\rho}\left( h_{12}(\lambda)+\partial_\theta h_{0\theta}(\lambda) \right)\\
\dot{h}_{3}=h_{03}(\lambda)+h_{\theta 3}(\lambda)
\end{array}\right.
\end{equation}
where $h_{0\theta}(\lambda)=\cos(\theta)h_{01}(\lambda)+\sin(\theta)h_{02}(\lambda)$, and $\partial_\theta h_{0\theta}(\lambda)=\cos(\theta)h_{02}(\lambda)-\sin(\theta)h_{01}(\lambda)$.
\begin{claim}
\label{349}
At the singular point $\bar{\lambda}$ the function $\theta\mapsto h_{12}+\cos(\theta)h_{02}-\sin(\theta)h_{01}$ has two, one or no zeros, if $r^2>h^2_{12}$, $r^2=h^2_{12}$ or $r^2<h^2_{12}$ correspondently.
\end{claim}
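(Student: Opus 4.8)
The plan is to recognize the $\theta$-dependent part of the function as a single sinusoid and thereby reduce the zero-counting problem to the elementary equation $\cos(\theta-\phi)=c$. First I would observe that in the expression $\cos\theta\,h_{02}-\sin\theta\,h_{01}$ the quantities $h_{01}$ and $h_{02}$ are the \emph{constant} values $h_{01}(\bar\lambda)$, $h_{02}(\bar\lambda)$, so this is a fixed linear combination of $\cos\theta$ and $\sin\theta$. Writing it in amplitude--phase form, its amplitude is $\sqrt{h_{01}^2+h_{02}^2}=r$, so there exists a phase $\phi$ (depending only on $h_{01},h_{02}$) with
$$
\cos\theta\,h_{02}-\sin\theta\,h_{01}=r\cos(\theta-\phi).
$$
Hence the function of the claim equals $g(\theta)=h_{12}+r\cos(\theta-\phi)$.

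Next I would solve $g(\theta)=0$. When $r>0$ this is equivalent to $\cos(\theta-\phi)=-h_{12}/r$, and over one full period of $\theta$ the number of solutions is dictated entirely by the magnitude of the right-hand side: the equation $\cos(\theta-\phi)=c$ has exactly two solutions if $|c|<1$, exactly one if $|c|=1$, and none if $|c|>1$. Translating the three conditions $|{-h_{12}/r}|<1$, $|{-h_{12}/r}|=1$, $|{-h_{12}/r}|>1$ back into $r^2$ and $h_{12}^2$ gives precisely $r^2>h_{12}^2$, $r^2=h_{12}^2$, and $r^2<h_{12}^2$, yielding two, one, and no zeros respectively, which is the assertion.

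The only point needing extra care is the degenerate case $r=0$, where the amplitude--phase reduction is vacuous. Here I would invoke the standing non-degeneracy assumption (\ref{54310}): if $r=0$ then $h_{01}(\bar\lambda)=h_{02}(\bar\lambda)=0$, so (\ref{54310}) forces $h_{12}\neq0$; consequently $g(\theta)\equiv h_{12}\neq0$ has no zeros, consistent with $r^2=0<h_{12}^2$. There is no substantial obstacle in this argument; it is essentially trigonometry, and the main thing to be careful about is handling the boundary case $r=0$ through the non-degeneracy condition rather than through the sinusoidal identity.
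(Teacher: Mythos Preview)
Your proof is correct and follows essentially the same approach as the paper: the paper sets $(h_{01},h_{02})=r(\cos\phi,\sin\phi)$ and reduces the equation to $\sin(\theta-\phi)=h_{12}/r$, which differs from your $\cos(\theta-\phi)=-h_{12}/r$ only by the choice of phase convention. Your explicit treatment of the degenerate case $r=0$ via condition~(\ref{54310}) is a small clarification the paper omits, but the core argument is identical.
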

\begin{proof}
 We set: $(h_{01}, h_{02})=r(\cos(\phi),\sin(\phi))$; then $h_{12}+\cos(\theta)h_{02}-\sin(\theta)h_{01}=0$ if and only if $\sin(\theta-\phi)=\frac{h_{12}}{r}$.
\end{proof}
\begin{lemma}
\label{lemma1}
Given the singular point $\bar{\lambda}$ and $\bar{\lambda}_{\theta_i}$ such that it holds $h_{12}+\cos(\theta_i)h_{02}-\sin(\theta_i)h_{01}=0$. We consider $O_{\bar{\lambda}_{\theta_i}}$ neighborhoods of $\bar{\lambda}_{\theta_i}$, and a neighbourhood $O_{\bar{\lambda}}$ small enough such that $\forall \hat{\lambda}_{\hat{\theta}}\in O_{\bar{\lambda}}\setminus \overline{\cup_{i}O_{\bar{\lambda}_{\theta_i}}}$ it holds $h_{12}(\hat{\lambda})+\cos(\hat{\theta})h_{02}(\hat{\lambda})-\sin(\hat{\theta})h_{01}(\hat{\lambda})\neq 0$. For each connected component $O$ of $O_{\bar{\lambda}}\setminus \overline{\cup_{i}O_{\bar{\lambda}_{\theta_i}}}$ there exist constants  $c>0$ and $\alpha>0$ such that if an extremal $\lambda(t)$ lies in $O$ for a time interval $I=(0,\tau_1)$, with $\lambda(0)\not \in \Lambda$, then it  holds the following inequality: $\rho(t)\geq c e^{-\alpha t}\rho(0)$, for $t\in I$.
\end{lemma}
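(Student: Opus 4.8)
The plan is to work entirely within the blown-up Hamiltonian system (\ref{11100}) and to extract the estimate from the coupled evolution of $\rho$ and $\theta$ alone; the $x$- and $h_3$-components play no role. Write $D(\lambda):=h_{12}(\lambda)+\partial_\theta h_{0\theta}(\lambda)=h_{12}(\lambda)+\cos\theta\,h_{02}(\lambda)-\sin\theta\,h_{01}(\lambda)$, the function whose zeros were located in Claim \ref{349}, so that $\dot\theta=D(\lambda)/\rho$ and $\dot\rho=h_{0\theta}(\lambda)$. By the very definition of the neighborhoods $O_{\bar\lambda}$ and $O_{\bar\lambda_{\theta_i}}$, $D$ does not vanish on $O_{\bar\lambda}\setminus\overline{\bigcup_i O_{\bar\lambda_{\theta_i}}}$; hence on the closure of a single connected component $O$ the continuous function $D$ keeps a constant sign and, by compactness, is bounded away from zero, say $|D|\ge\delta>0$ on $\overline O$. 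Shrinking $O_{\bar\lambda}$ if necessary we may also assume $\rho\le\rho_{\max}$ there and that $h_{0\theta}$ is bounded.

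First I would record the two consequences of $|D|\ge\delta$ that drive the proof. Since $\rho>0$ along an extremal with $\lambda(0)\notin\Lambda$ (so $\rho(0)>0$) as long as it stays off $\Lambda$, the relation $\dot\theta=D/\rho$ shows that $\theta(t)$ is strictly monotone on $O$ and that $|\dot\theta|\ge\delta/\rho_{\max}>0$; in particular $\theta$ cannot oscillate and, being confined to the bounded $\theta$-range of $O$ (an arc of length $L\le2\pi$), its total variation on $I$ is at most $L$. The central computation is then the logarithmic derivative of $\rho$: from $\dot\rho=h_{0\theta}=g(\lambda)\,D(\lambda)=g(\lambda)\,\rho\,\dot\theta$, where $g:=h_{0\theta}/D$ is continuous, hence bounded, $|g|\le M$, on $\overline O$, we obtain
\begin{equation*}
\frac{d}{dt}\log\rho(t)=g(\lambda(t))\,\dot\theta(t).
\end{equation*}
Integrating over $(0,t)\subseteq I$ and using monotonicity of $\theta$ together with $|g|\le M$ gives $|\log\rho(t)-\log\rho(0)|\le M\,|\theta(t)-\theta(0)|\le ML$, that is $\rho(t)\ge e^{-ML}\rho(0)$. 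Since $e^{-\alpha t}\le1$ for $t\ge0$, this already yields the claimed bound with $c:=e^{-ML}$ and any $\alpha>0$; if one prefers, the same inequalities show that $\theta$ crosses the arc of length $L$ in time $\tau_1\le L\rho_{\max}/\delta$, so the time interval is in any case bounded and the two forms of the estimate are equivalent.

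Two points need care. First, to legitimately integrate $\frac{d}{dt}\log\rho$ one must know that $\rho$ stays strictly positive on $I$; this is a short bootstrap: on the maximal subinterval of $I$ on which $\rho>0$ the estimate above holds and forces $\rho\ge e^{-ML}\rho(0)>0$, so $\rho$ cannot reach $0$, and $\rho>0$ persists on all of $I$ (here $\lambda(0)\notin\Lambda$ is used precisely to start with $\rho(0)>0$). I expect the main obstacle to be the justification that $D$ is bounded away from zero on the closure $\overline O$ --- this is exactly what the somewhat delicate choice of the nested neighborhoods $O_{\bar\lambda_{\theta_i}}\subset O_{\bar\lambda}$ in the statement is designed to guarantee, and it is where the hypothesis of Claim \ref{349} (the location and number of zeros of $D$ on the circle $\rho=0$) enters: the components $O$ are precisely the angular sectors between consecutive zeros, on each of which $D$ has a definite sign. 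Everything else is a one-line Gr\"onwall-type integration.
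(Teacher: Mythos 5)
Your argument contains a genuine gap, and it sits precisely in the case for which Lemma \ref{lemma1} is most needed. The decisive step is where you bound the total variation of the monotone function $\theta(t)$ by the angular width $L\le 2\pi$ of the component $O$. This is fine when the $\theta$-projection of $O$ is a proper arc of the circle, but it fails when the family of zeros $\{\bar{\lambda}_{\theta_i}\}$ is \emph{empty}, i.e.\ in the case $r^2<h^2_{12}$ of Claim \ref{349}: then nothing is removed, the unique connected component is $O=O_{\bar{\lambda}}$ itself, a full neighbourhood of the blown-up circle $\{\rho=0\}$, and $\theta$ is a genuinely periodic variable. An extremal may wind around the singular locus arbitrarily many times, so its monotone lift $\theta(t)$ is unbounded and $|\theta(t)-\theta(0)|$ is not controlled by $2\pi$. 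This is not a repairable technicality: your conclusion $\rho(t)\ge e^{-ML}\rho(0)$, uniform in $t$, is actually \emph{false} in that regime. Indeed, freezing the slow variables $(x,h_3)$ and writing $(h_{01},h_{02})=r(\cos\phi,\sin\phi)$, one full turn gives $\oint (h_{0\theta}/D)\,d\theta=\oint r\cos(\theta-\phi)\bigl(h_{12}-r\sin(\theta-\phi)\bigr)^{-1}d\theta=0$ (the integrand is an exact derivative of $-\log|D|$, well defined since $|h_{12}|>r$), so the net change of $\log\rho$ per turn comes entirely from the drift of $h_{01},h_{02},h_{12}$ along the flow, which is proportional to elapsed \emph{time}. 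Extremals can therefore spiral toward $\Lambda$ with $\rho$ decaying exponentially in $t$, never reaching $\Lambda$ in finite time; this is exactly why the Lemma asserts a bound of the form $c\,e^{-\alpha t}\rho(0)$ rather than a time-independent one, and the no-zero case is the one the paper immediately feeds into the Corollary settling the $r^2<h^2_{12}$ part of Theorem \ref{resultneq}. Your fallback claim that the time interval is bounded by $L\rho_{\max}/\delta$ collapses there for the same reason.

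The paper's proof is built to survive the winding. Instead of the logarithmic derivative of $\rho$ (which carries the singular factor $1/\rho$ through $\dot\theta$), it differentiates the product $\rho D$ along the flow of \eqref{11100}: a direct computation gives $\frac{d}{dt}\bigl[\rho(t)\,D(\lambda(t))\bigr]=\rho(t)\,A(\lambda(t))$, where $A=\dot{h}_{12}+\cos\theta\,\dot{h}_{02}-\sin\theta\,\dot{h}_{01}$ and each $\dot{h}_{ij}=h_{0ij}+\cos\theta\, h_{1ij}+\sin\theta\, h_{2ij}$ involves only the bounded functions $h_{kij}$ --- crucially, no division by $\rho$ appears. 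With $c_1\ge D\ge c_2>0$ and $A\ge C$ (some $C<0$) on $O$, a Gr\"onwall-type argument applied to $\rho D$ yields $\rho(t)\ge \frac{c_2}{c_1}\,e^{(C/c_2)t}\rho(0)$, an estimate insensitive to how many times $\theta$ winds. Your identity $\frac{d}{dt}\log\rho=(h_{0\theta}/D)\,\dot\theta$ is correct, and on components whose $\theta$-projection really is a proper arc (the situation when zeros $\theta_i$ exist and the removed neighbourhoods cover the zero set of $D$) your argument is valid and even gives a stronger, time-uniform bound; but it cannot prove the Lemma in the generality in which the paper uses it.
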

\begin{proof}
Without loss of generality let us study a connected component $O$ of $O_{\bar{\lambda}}\setminus \overline{\cup_{i}O_{\bar{\lambda}_{\theta_i}}}$ where $$h_{12}(\lambda)+\partial_\theta h_{0\theta}(\lambda) > 0.$$
Since in $O$ the map $\lambda \rightarrow h_{12}(\lambda)+\partial_\theta h_{0\theta}(\lambda)$ is continuous and not null, it is bounded, then there exist constants $c_1>0$ and $c_2>0$ such that
$$c_1 \geq h_{12}(\lambda)+\partial_\theta h_{0\theta}(\lambda)\geq c_2>0.
$$
Given the extremal $\lambda(t)$ in $O$, we can observe that
$$\frac{d}{dt}\left[\rho(t)\left[h_{12}(\lambda(t)) + \partial_\theta h_{0\theta}(\lambda(t))\right]\right] =\rho(t)A(\lambda(t))
$$
where
$$A(\lambda(t))=\dot{h}_{12}(\lambda(t))+\cos(\theta(t))\dot{h}_{02}(\lambda(t))-\sin(\theta(t))\dot{h}_{01}(\lambda(t)).$$
Moreover, we can claim that $A_{|O}$ is bounded from below by a negative constant $C$
$$A_{|O}\geq C,
$$
due to the facts that, by Remark \ref{derivPoiss} $$\dot{h}_{ij}(\lambda(t))=h_{0ij}(\lambda(t))+\cos(\theta(t))h_{1ij}(\lambda(t))+\sin(\theta(t))h_{2ij}(\lambda(t)),$$ and any function $h_{kij}(\lambda)$ is continuous in $\bar{\lambda}$, for each indexes $i,j,k \in \{0,1,2\}$.\\
Finally, we can see that
$$\frac{d}{dt}\left[\frac{\rho(t)\left[h_{12}(\lambda(t))+ \partial_\theta h_{0\theta}(\lambda(t))\right]}{\mathrm{exp}\left( \int^t_0 C \left[h_{12}(\lambda(t))+ \partial_\theta h_{0\theta}(\lambda(t))\right]^{-1}ds \right)}\right]\geq 0,
$$
hence, for each $t\geq 0$, by the monotonicity:
$$
\begin{array}{rcl}
\rho(t)&\geq& \rho(0)\,\,\frac{h_{12}(\lambda(0))+ \partial_\theta h_{0\theta}(\lambda(0))}{h_{12}(\lambda(t))+ \partial_\theta h_{0\theta}(\lambda(t))}\,\,\mathrm{exp}\left( \int^t_0 C \left[h_{12}(\lambda(s))+ \partial_\theta h_{0\theta}(\lambda(s))\right]^{-1}ds \right)\\
&\geq&\rho(0)\,\,\frac{c_2}{c_1}\,\,\mathrm{exp}\left( \frac{C}{c_2}t \right).
\end{array}
$$
Denoting $c:=\frac{c_2}{c_1}$ and $\alpha:=-\frac{C}{c_2}$, the thesis follows.
\end{proof}
\subsubsection{The $r^2< h^2_{12}$ case}\textcolor{white}{.}\\
Lemma \ref{lemma1} and Claim \ref{349} immediately imply the following Corollary:
\begin{corollary}
If we assume conditions (\ref{54310}) and $r^2< h^2_{12}$, given $O_{\bar{\lambda}}$ small enough there exist two constants $c>0$ and $\alpha>0$ such that every extremal that lies for a time interval $I$ in $O_{\bar{\lambda}}$ satisfies the following inequality: $\rho(t)\geq c e^{-\alpha t}\rho(0)$, for $t\in I$.
\end{corollary}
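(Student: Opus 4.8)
The plan is to deduce the statement directly from Lemma~\ref{lemma1}, the key observation being that in the regime $r^2<h^2_{12}$ the exceptional set removed in that lemma is empty, so that the whole neighbourhood $O_{\bar{\lambda}}$ is itself the unique connected component to which the lemma applies.

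First I would invoke Claim~\ref{349}: when $r^2<h^2_{12}$, i.e. $|h_{12}|>r$, the equation $\sin(\theta-\phi)=h_{12}/r$ has no solution, so the function $\theta\mapsto h_{12}(\bar{\lambda})+\cos(\theta)h_{02}(\bar{\lambda})-\sin(\theta)h_{01}(\bar{\lambda})$ never vanishes on the blown-up circle $\{\rho=0\}$ over $\bar{\lambda}$. Hence there are no singular angles $\bar{\lambda}_{\theta_i}$ at all, and in the notation of Lemma~\ref{lemma1} the union $\cup_i O_{\bar{\lambda}_{\theta_i}}$ is empty.

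Next I would upgrade this pointwise non-vanishing to a uniform one. Since $h_{01},h_{02},h_{12}$ are continuous in $\lambda$ and the circle $\{\rho=0\}$ over $\bar{\lambda}$ is compact, the continuous map $\lambda\mapsto h_{12}(\lambda)+\partial_\theta h_{0\theta}(\lambda)$ keeps a constant sign and stays bounded away from zero on a sufficiently small neighbourhood $O_{\bar{\lambda}}$. Thus $O_{\bar{\lambda}}\setminus\overline{\cup_i O_{\bar{\lambda}_{\theta_i}}}=O_{\bar{\lambda}}$ is a single connected component, and Lemma~\ref{lemma1} applies verbatim with $O=O_{\bar{\lambda}}$, producing constants $c>0$ and $\alpha>0$, uniform over all extremals, with $\rho(t)\geq c\,e^{-\alpha t}\rho(0)$ on any time interval $I$ on which the extremal remains in $O_{\bar{\lambda}}$ (the degenerate case $\rho(0)=0$ being trivial, as $\rho\geq 0$).

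The main and essentially only obstacle is this passage from the pointwise condition at $\bar{\lambda}$ to a uniform bound on a neighbourhood, which is exactly the continuity and compactness argument above; all the quantitative content — the differential inequality for $\rho\,[h_{12}+\partial_\theta h_{0\theta}]$ and the Gr\"onwall-type integration yielding $c$ and $\alpha$ — is already carried out in Lemma~\ref{lemma1}.
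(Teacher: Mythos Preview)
Your proof is correct and follows exactly the route the paper takes: the paper simply states that the corollary is an immediate consequence of Lemma~\ref{lemma1} and Claim~\ref{349}, and your argument spells out precisely this deduction --- Claim~\ref{349} gives no zeros $\theta_i$, hence the exceptional set in Lemma~\ref{lemma1} is empty and $O_{\bar\lambda}$ itself is the single connected component $O$.
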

This Corollary proves the $r^2<h^2_{12}$ case of the Theorem, because it shows that, given this condition, every optimal extremal in $O_{\bar{\lambda}}$ does not intersect the singular locus  in finite time, and forms a smooth local flow.
\subsubsection{The $r^2>h^2_{12}$ case}
\begin{proposition}
Assuming conditions (\ref{54310}) and (\ref{54310>}), there exists a unique extremal that passes through $\bar{\lambda}$ in finite time.
\end{proposition}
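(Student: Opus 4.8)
The plan is to extract, from the two angles furnished by Claim \ref{349}, the unique one along which an extremal can reach the singular locus moving forward in time, and then to solve the resulting \emph{singular} boundary-value problem at $\rho=0$ by a contraction argument in the variable $\rho$. The whole difficulty is concentrated in the $1/\rho$ factor of the $\dot\theta$-equation in (\ref{11100}), so the strategy is to first locate the admissible limiting angle and then to desingularize the flow.

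First I would record the local structure of the two zeros of $h_{12}+\partial_\theta h_{0\theta}$. Since $h_{0\theta}=\cos\theta\,h_{01}+\sin\theta\,h_{02}$ obeys $\partial_\theta^2 h_{0\theta}=-h_{0\theta}$, the $\theta$-derivative of the bracket $h_{12}+\partial_\theta h_{0\theta}$ at any zero equals $-h_{0\theta}$; moreover at the two zeros one has $\dot\rho=h_{0\theta}=\pm\sqrt{r^2-h_{12}^2}$, which under (\ref{54310>}) is nonzero and of opposite sign at the two angles. I denote by $\theta_1$ the zero with $h_{0\theta_1}=-\sqrt{r^2-h_{12}^2}<0$, along which $\rho$ decreases. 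Using Lemma \ref{lemma1} I would argue that an extremal cannot reach $\{\rho=0\}$ in finite time while $\theta$ stays in a connected component where the bracket does not vanish, since there $\rho(t)\ge c\,e^{-\alpha t}\rho(0)>0$; hence any extremal arriving at $\bar\lambda$ must approach a zero. Arrival along $\theta_2$ is excluded because there $\dot\rho>0$ would force $\rho$ to have been negative just before arrival, so the incoming direction is forced to be $\theta_1$. This reduces the statement to constructing exactly one trajectory with $\theta(t)\to\theta_1$, $\rho(t)\to0$, $x(t)\to\bar x$, $h_3(t)\to\bar h_3$.

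The key step is to take $\rho$ as the independent variable near $\bar\lambda_{\theta_1}$, which is legitimate because $\dot\rho=h_{0\theta}$ is bounded away from $0$ there. The reparametrized system reads
\[
\frac{d\theta}{d\rho}=\frac{h_{12}+\partial_\theta h_{0\theta}}{\rho\,h_{0\theta}},\quad \frac{dx}{d\rho}=\frac{f_0+f_\theta}{h_{0\theta}},\quad \frac{dh_3}{d\rho}=\frac{h_{03}+h_{\theta3}}{h_{0\theta}},
\]
where the last two right-hand sides are bounded. Writing $\psi=\theta-\theta_1$ and using $\partial_\theta(h_{12}+\partial_\theta h_{0\theta})|_{\theta_1}=-h_{0\theta_1}$ one obtains $\frac{d\psi}{d\rho}=-\frac{\psi}{\rho}+F$, where $F=\frac1\rho\big(\tfrac{h_{12}+\partial_\theta h_{0\theta}}{h_{0\theta}}+\psi\big)$ collects the quadratic-in-$\psi$ terms and the dependence on the deviations of $x,h_3$, and stays bounded on the relevant trajectories. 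Multiplying by the integrating factor $\rho$ turns this into $\frac{d}{d\rho}(\rho\psi)=\rho F$, equivalently the integral equation $\psi(\rho)=\frac1\rho\int_0^\rho sF\,ds$, which automatically enforces $\psi(\rho)\to0$; together with $x(\rho)=\bar x+\int_0^\rho \frac{f_0+f_\theta}{h_{0\theta}}\,d\rho'$ and the analogous formula for $h_3$, the whole problem becomes a fixed point on $C([0,\varepsilon])$ with data prescribed at $\rho=0$.

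Finally I would check that this map is a contraction for $\varepsilon$ small: the averaging operator $\psi\mapsto\frac1\rho\int_0^\rho s(\cdots)\,ds$ gains a factor of order $\rho$, while the $x$- and $h_3$-components are controlled by the standard Picard estimate, so for small $\varepsilon$ the Lipschitz constant is $<1$. This yields existence and uniqueness of the incoming arc through $\bar\lambda$; an identical construction at $\theta_2$ (now with $\dot\rho>0$) produces the outgoing arc, and concatenating gives the unique extremal passing through $\bar\lambda$. Finiteness of the transit time follows from $t=\int_0^{\rho_0}\frac{d\rho}{|h_{0\theta}|}\approx \rho_0/\sqrt{r^2-h_{12}^2}<\infty$. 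I expect the main obstacle to be precisely the singularity of the $\theta$-equation at $\rho=0$, where ordinary existence and uniqueness fail: the entire argument hinges on the identity $\frac{d}{d\rho}(\rho\psi)=\rho F$, which simultaneously supplies the correct boundary behaviour $\rho\psi\to0$ and makes the fixed-point map contractive. A secondary delicate point is ruling out extremals that spiral into $\{\rho=0\}$ without $\theta$ converging, for which the exponential lower bound of Lemma \ref{lemma1} on the complementary components is the essential tool.
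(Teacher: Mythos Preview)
Your proposal is essentially correct and reaches the same conclusion, but by a different route than the paper.

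The paper desingularizes by rescaling \emph{time}: setting $dt=\rho\,ds$ turns (\ref{11100}) into the smooth system (\ref{11100s}), for which $\{\rho=0\}$ is invariant and the points $\bar\lambda_{\theta_\pm}$ are equilibria. The linearization there has exactly the two nonzero eigenvalues $\pm h_{0\theta}|_{\bar\lambda_{\theta_\pm}}=\pm\sqrt{r^2-h_{12}^2}$ (the same quantity you compute), with a four-dimensional centre direction on which the dynamics is trivial. Shoshitaishvili's reduction theorem then furnishes one-dimensional stable and unstable manifolds, hence a unique trajectory entering at $\theta_-$ and a unique one leaving at $\theta_+$; finiteness of the original time follows from $\Delta t=\int\rho(s)\,ds$ with $\rho(s)$ decaying exponentially in $s$. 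You instead desingularize by taking $\rho$ as the independent variable, recognise the resulting $\theta$-equation as having a regular singular point, and use the integrating factor $\rho$ plus a Picard contraction to isolate the unique bounded solution through $\rho=0$. The two arguments are formally related (your $d/d\rho$ is the paper's $d/ds$ up to the bounded factor $h_{0\theta}$), but the paper's version packages the whole hyperbolic/centre splitting into a single black-box theorem and immediately yields uniform dependence on the base point $\hat\lambda\in\Lambda$, which the paper then uses for the continuity statement in Proposition~\ref{prop}. Your version is more elementary and self-contained; the only point that needs care, and that you flag but do not fully carry out, is that the remainder $F$ contains a $\tfrac1\rho$ times terms that are $O(\psi^2)+O(\rho)+O(|x-\bar x|)+O(|h_3-\bar h_3|)$, so boundedness of $F$ is not automatic but must be built into the fixed-point space (e.g.\ work with the weighted norm $\sup_\rho|\psi(\rho)|/\rho$, in which the averaging operator indeed contracts). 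With that detail made explicit, your argument is complete.
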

\begin{proof}
Let us prove that there is a unique solution of the system (\ref{11100}) passing through its point of discontinuity $\bar{\lambda}$ in finite time.\\
In order to detect solutions that go through $\bar{\lambda}$, we rescale the time considering the time $t(s)$ such that $\frac{d}{ds}t(s)=\rho(s)$ and we obtain the following system
\begin{equation}
\label{11100s}
\left\lbrace \begin{array}{l}
x'=\rho\left(f_0(x)+f_{\theta}(x)\right)\\
\rho' = \rho h_{0\theta}\\
\theta' =h_{12}+\frac{\partial}{\partial \theta}h_{0\theta} \\
h'_{3}=\rho\left(h_{03}+h_{\theta3}\right),
\end{array}\right.
\end{equation}
with a smooth right-hand side.\\
This system has an invariant subset $\{\rho=0\}$ in which only the $\theta$-component is moving. Moreover, as we saw from Claim \ref{349}, at $\bar{\lambda}$ there are two equilibria $\bar{\lambda}_{\theta_-}=((0,\theta_-,1),\bar{x})$ and $\bar{\lambda}_{\theta_+}=((0,\theta_+,1),\bar{x})$, such that $\sin(\theta_{\pm}-\phi)=\frac{h_{12}}{r}$ and $\cos(\theta_{\pm}-\phi)=\pm\frac{\sqrt{r^2-h^2_{12}}}{r}$.

\bigskip
Let us present the Shoshitaishvili's Theorem \cite{sh} that explain how is the behaviour of the solutions in $\bar{\lambda}_{\theta_-}$ and $\bar{\lambda}_{\theta_+}$.
\begin{theorem}[Shoshitaishvili's Theorem]
In $\mathbb{R}^n$, let the $C^k$-germ, $2\leq k<\infty$, of the family
\begin{equation}
\label{222}
\left\lbrace \begin{array}{l}
\dot{z}=Bz+r(z,\varepsilon),\\
\dot{\varepsilon}=0, z\in\mathbb{R}^n,\varepsilon\in\mathbb{R}^l,
\end{array}\right.
\end{equation}
be given, where $r\in C^k(\mathbb{R}^n\times \mathbb{R}^l)$, $r(0,0)=0$, $\partial_{z}r_{|(0,0)}=0$, and $B:\mathbb{R}^n\rightarrow \mathbb{R}^n$ is a linear operator whose eigenvalues are divided into three groups:
$$
\begin{array}{c}
\begin{array}{rcl}
\mathrm{I}&=&\{ \lambda_i, 1\leq i \leq k^0|\, \mathrm{Re}\lambda_i=0 \}\\
\mathrm{II}&=&\{ \lambda_i, k^0+1\leq i \leq k^0+k^-|\,\mathrm{Re}\lambda_i<0 \}\\
\mathrm{III}&=&\{ \lambda_i, k^0+k^-+1\leq i \leq k^0+k^-+k^+|\,\mathrm{Re}\lambda_i>0 \}
\end{array}
\\
\\
\\
k^0+k^-+k^+=n.
\end{array}
$$
Let the subspaces of $\mathbb{R}^n$, which are invariant with respect $B$ and which correspond to these groups be denoted by $X$, $Y^-$ and $Y^+$ respectively, and let $Y^-\times Y^+$ be denoted by $Y$.\\
Then the following assertions are true:\\
\begin{enumerate}
\item There exists a $C^{k-1}$ manifold $\gamma^0$ that is invariant with respect to the germ (\ref{222}), may be given by the graph of mapping $\gamma^0:X\times \mathbb{R}^l\rightarrow Y$, $y=\gamma^0(x,\varepsilon)$, and satisfies $\gamma^0(0,0)=0$ and $\partial_x \gamma^0(0,0)=0$.
\item The germ of the family (\ref{222}) is homeomorphic to the product of the multidimensional saddle $\dot{y}^+=y^+$, $\dot{y}^-=-y^-$, and the germ of the family
$$\left\lbrace \begin{array}{l}
\dot{x}=Bx+r_1(x,\varepsilon),\\
\dot{\varepsilon}=0,
\end{array} \right.
$$
where $r_1(x,\varepsilon)$ is the $x$-component of the vector $r(z,\varepsilon)$, $z=(x,\gamma^0(x,\varepsilon))$, i.e. the germ of (\ref{222}) is homeomorphic to the germ of the family
$$\left\lbrace \begin{array}{l}
\begin{array}{ll}
\dot{y}^+=y^+, & \dot{y}^-=-y^-
\end{array}\\
\begin{array}{ll}
\dot{x}=Bx+r_1(x,\varepsilon), & \dot{\varepsilon}=0.
\end{array}
\end{array} \right.
$$
\end{enumerate}
\end{theorem}
Let us investigate which are the eigenvalues of the Jacobian of the system (\ref{11100s}).\\
Since $\bar{\lambda}_{\theta_-}$ and $\bar{\lambda}_{\theta_+}$ belong to the invariant subset $\{\rho=0\}$, where the components $\rho$, $h_3$ and $x$ are fixed, at $\bar{\lambda}_{\theta_\pm}$ the eigenvalues are ${\partial_{\rho}\rho'}_{|\bar{\lambda}_{\theta_\pm}}={h_{0\theta}}_{|\bar{\lambda}_{\theta_\pm}}$ and ${\partial_{\theta}\theta'}_{|\bar{\lambda}_{\theta_\pm}}=-{h_{0\theta}}_{|\bar{\lambda}_{\theta_\pm}}$ and four $0$.\\
Moreover, ${h_{0\theta}}_{|\bar{\lambda}_{\theta_-}}$ and ${h_{0\theta}}_{|\bar{\lambda}_{\theta_+}}$ are not null with opposite sign, because it holds
$$\begin{array}{rcl}
{h_{0\theta}}_{|\bar{\lambda}_{\theta_\pm}}&=& \cos(\theta_\pm)h_{01}+\sin(\theta_\pm)h_{02}\\
&=&r \cos(\theta_\pm- \phi)=\pm \sqrt{r^2-h^2_{12}},
\end{array}
$$
namely ${h_{0\theta}}_{|\bar{\lambda}_{\theta_-}}=-\sqrt{r^2-h^2_{12}}$ and ${h_{0\theta}}_{|\bar{\lambda}_{\theta_+}}=\sqrt{r^2-h^2_{12}}$.

Central manifolds $\gamma^0$ of Theorem~4.5 applied to the equilibria $\bar\lambda_{\theta_\pm}$ are 4-dimensional submanifolds defined by the equations $\rho=0,\ \theta=\theta_\pm$. The dynamics on the central manifold is trivial: all points are equilibria. Hence, according to the Shoshitaishvili Theorem, only trajectories from the one-dimensional asymptotically stable (unstable) invariant submanifold tend to the equilibrium point
$\bar\lambda_{\theta_\pm}$ as $t\to+\infty\ (t\to\-\infty)$. Moreover, $\rho=0$ is a 5-dimensional invariant submanifold with a very simple dynamics: $\theta$ is moving from $\theta_-$ to $\theta_+$.

\begin{center}
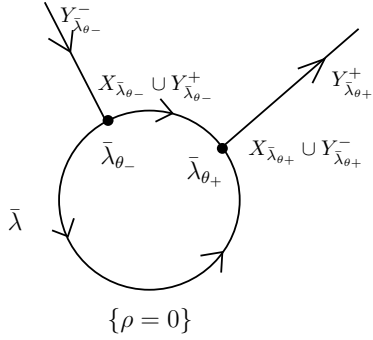

\psscalebox{.8 .8} % Change this value to rescale the drawing.
{\begin{pspicture}(-1,-3)(5,3)
\pscircle[linecolor=black, linewidth=0.03, dimen=outer](2.36,-0.6065674){1.5}
\psline[linecolor=black, linewidth=0.03](3.58,0.29343262)(5.8,2.2334325)
\rput[bl](2.94,-0.38656738){\Large{$\bar{\lambda}_{\theta_+}$}}
\rput[bl](1.52,-0.13343262){\Large{$\bar{\lambda}_{\theta_-}$}}
\psline[linecolor=black, linewidth=0.03](1.64,0.7134326)(0.64,2.6734326)
\psline[linecolor=black, linewidth=0.03](0.7,2.0734327)(1.06,1.8534327)(1.1,2.2134326)
\psline[linecolor=black, linewidth=0.03](2.54,1.0734326)(2.74,0.8334326)(2.46,0.6934326)
\psline[linecolor=black, linewidth=0.03](4.8,1.5734326)(5.24,1.7334327)(5.04,1.3334327)
\psline[linecolor=black, linewidth=0.03](3.2,-1.5265674)(3.56,-1.4665674)(3.54,-1.8065674)
\psline[linecolor=black, linewidth=0.03](0.76,-1.1065674)(1.02,-1.2065674)(1.12,-0.9065674)
\rput[bl](5.32,1.0734326){$Y^+_{\bar{\lambda}_{\theta+}}$}
\rput[bl](3.96,-0.06656738){$X_{\bar{\lambda}_{\theta+}}\cup Y^-_{\bar{\lambda}_{\theta+}}$}
\rput[bl](0.86,2.1334327){$Y^-_{\bar{\lambda}_{\theta-}}$}
\rput[bl](1.48,1.0134326){$X_{\bar{\lambda}_{\theta-}}\cup Y^+_{\bar{\lambda}_{\theta-}}$}
\psdots[linecolor=black, dotsize=0.18](1.68,0.7134326)
\psdots[linecolor=black, dotsize=0.18](3.56,0.25343263)
\rput[bl](1.64,-2.8065674){\Large{$\{\rho=0\}$}}
\rput[bl](0.0,-1.1065674){\Large{$\bar{\lambda}$}}
\end{pspicture}}
\captionof{figure}{Description of stable and unstable components of the equilibria. }
\end{center}
\bigskip

Keeping in mind that only a part of the phase portrait where $\rho\ge 0$ is relevant for our study we obtain that exactly one extremal enters submanifold $\rho=0$ at $\bar\lambda_{\theta_-}$ and exactly one extremal goes out of this submanifold at $\bar\lambda_{\theta_+}$. Moreover, the same result in the same neighborhood is valid for any $\hat\lambda\in\Lambda$ sufficiently close to $\bar\lambda$ with $\hat\lambda$ playing the role of parameter $\varepsilon$ in the Shoshitaishvili Theorem.

Finally, we are going to show that the extremal that we found passes through $\bar{\lambda}$ in finite time. At the moment it is known that there exists $\lambda(t(s))$ that satisfies (\ref{11100s}) and it reaches $\bar{\lambda}$ at a equilibrium, so $\lambda(t(s))$ attains and escapes from $\bar{\lambda}$ in infinite time $s$. \\
Thus, let us estimate the time $\Delta t$ that this extremal needs to reach $\bar{\lambda}$.

Due to the facts that ${h_{0\theta}}_{|\bar{\lambda}_{\theta -}}<0$ and $h_{0\theta}$ is continuous in $\bar{\lambda}_{\theta_-}$, there exist a neighbourhood $O_{\bar{\lambda}_{\theta-}}$ of $\bar{\lambda}_{\theta_-}$, in which $h_{0\theta}$ is bounded from above by a negative constant $c_1<0$, namely ${h_{0\theta}}_{|O_{\bar{\lambda}_{\theta-}}}<c_1<0$. \\
Hence, in $O_{\bar{\lambda}_{\theta-}}$ we have the following estimate of the derivative $\rho'$ $$ \rho'=\rho\, h_{0\theta}<\rho \,c_1,$$
consequently until $\rho(s)>0$, it holds $$ \int^s_{s_0} \frac{\rho'}{\rho}ds<\int^s_{s_0} c_1 ds,$$
then this inequality implies $\log(\rho(s))<c_1 (s-s_0)+\log(\rho(s_0))$, and so $$\rho(s)<\rho(s_0)e^{c_1 (s-s_0)}.$$
Since $\frac{d}{ds}t(s)=\rho(s)$, the amount of time that we want to estimate is the following
$$\Delta t=\lim_{s\rightarrow \infty}t(s)-t(s_0)=\int^{\infty}_{s_0} \rho(s) ds,
$$
therefore,
$$\Delta t=\int^{\infty}_{s_0} \rho(s) ds<\rho(s_0)\int^{\infty}_{s_0} e^{c_1 (s-s_0)}ds=\frac{\rho(s_0)}{-c_1}<\infty.
$$
The amount of time in which this extremal goes out from $\bar{\lambda}$ may be estimate in an analogous way.
\end{proof}

By the previous Proposition and the fact that every extremal out of $\Lambda$ is smooth, it is proven that there exist a neighbourhood $O_{\bar\lambda}\subset T^*M$ and an interval $(\alpha,\beta),\ \alpha<0<\beta,$ such that for any $z\in O_{\bar\lambda}$ there exists a unique extremal $t\mapsto\lambda(t;z)$ with the initial condition $\lambda(0;z)=z$ defined on the interval $t\in(\alpha+\hat{t},\beta+\hat{t})$, with $\hat{t}\in(-\beta,-\alpha)$. Furthermore, every extremal in $O_{\bar{\lambda}}$ that passes through the singular locus is piece-wise smooth with only one switching. The control $u$ correspondent to the extremal that passes through $\bar\lambda$ jumps at the switching time $\bar t$ from $u(\bar{t}-0)=(\cos(\theta_-), \sin(\theta_-))$ to $u(\bar{t}+0)=(\cos(\theta_+), \sin(\theta_+))$, hence
$$\begin{array}{rcl}
u(\bar{t}\pm 0)&=&\left( \cos(\phi-(\theta_\pm- \phi)),\sin(\phi-(\theta_\pm -\phi))\right)\\
&=&\left( -\frac{h_{12}}{r} \sin(\phi ) \pm \frac{\sqrt{r^2-h^2_{12}}}{r} \cos(\phi ),\frac{h_{12}}{r} \cos(\phi ) \pm  \frac{\sqrt{r^2-h^2_{12}}}{r} \sin(\phi ) \right).
\end{array}
$$
\bigskip
\\
Let us conclude the proof with the following Proposition.
\begin{proposition}
\label{prop}
The map $(t;z)\rightarrow\lambda(t;z)$, $(t,z)\in(\alpha,\beta)\times O_{\lambda}$ is continuous.
\end{proposition}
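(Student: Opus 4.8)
The plan is to prove joint continuity by passing to the blow-up coordinates, where the time-rescaled system \eqref{11100s} has a smooth right-hand side and hence a flow $\Psi(s,w)$ depending smoothly on the rescaled time $s$ and on the initial point $w$. Away from the singular locus the right-hand side of \eqref{11100} is itself smooth, so the only pairs $(t_0,z_0)$ at which continuity is not immediate are those for which the extremal $\lambda(\cdot\,;z_0)$ meets $\Lambda$ at time $t_0$. After a translation in $t$ it therefore suffices to treat the extremal through $\bar\lambda$ with switching time $\bar t$, and to show that $\lambda(t;z)\to\bar\lambda$ in the original coordinates whenever $(t,z)\to(\bar t,\bar\lambda)$.

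First I would record the key estimate: the physical time spent by any extremal in a small neighbourhood of $\bar\lambda$ is uniformly small. Near $\bar\lambda_{\theta_-}$ we already have $\rho(s)<\rho(s_0)e^{c_1(s-s_0)}$ with $c_1<0$, and since $\frac{d}{ds}t(s)=\rho(s)$, the elapsed physical time from the entry value $\rho(s_0)$ is at most $\rho(s_0)/(-c_1)$; the symmetric bound holds at $\bar\lambda_{\theta_+}$, where $\rho$ expands. These bounds depend only on the (nonzero) sign of $h_{0\theta}$ at the equilibria, hence hold uniformly in a neighbourhood of each of them. Thus, given $\varepsilon>0$ there is a neighbourhood $W$ of $\bar\lambda$ such that every extremal meets $W$ for a physical-time interval of length $O(\varepsilon)$. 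Because in \eqref{11100s} one has $x'=\rho(f_0(x)+f_\theta(x))$ and $h_3'=\rho(h_{03}+h_{\theta 3})$, the increments of the $x$- and $h_3$-components over any interval are $O(\int\rho\,ds)=O(\Delta t)$, so they change by $O(\varepsilon)$ while the trajectory is in $W$; and since $(h_1,h_2)=(\rho\cos\theta,\rho\sin\theta)$ with $\rho<\varepsilon$, the full $\theta$-motion through the saddles is invisible in the original coordinates. Consequently, after shrinking $W$, every extremal stays within distance $O(\varepsilon)$ of $\bar\lambda$ in the original coordinates for its entire sojourn in $W$.

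With this in hand I would conclude by a splitting argument. Fix $\varepsilon>0$, the neighbourhood $W$ above, and a compact interval $[\bar t-T,\bar t+T]$. On the complement of $W$ the component $\rho$ is bounded below, so the corresponding rescaled-time interval is compact; there the smooth dependence of $\Psi$ on $(s,w)$, together with continuity of the time change $t(s;z)=\int_0^s\rho(\Psi(\sigma,w(z)))\,d\sigma$, gives $\lambda(t;z)\to\lambda(t;\bar\lambda)$ uniformly. Inside $W$ both $\lambda(t;z)$ and $\lambda(t;\bar\lambda)$ lie within $O(\varepsilon)$ of $\bar\lambda$ by the previous step, hence within $O(\varepsilon)$ of one another. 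Letting $\varepsilon\to0$ yields uniform convergence of $\lambda(\cdot\,;z)$ to $\lambda(\cdot\,;\bar\lambda)$ on $[\bar t-T,\bar t+T]$, and joint continuity in $(t,z)$ follows.

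The main obstacle is exactly the singular nature of the time reparametrisation: a finite physical-time passage through $\bar\lambda$ corresponds to an \emph{infinite} rescaled time $s$, so the smooth dependence of $\Psi$ on compact $s$-intervals cannot be invoked directly across the switch. Overcoming this is the content of the uniform transit-time estimate above, which confines the ``infinite-$s$'' portion of each trajectory to an arbitrarily small neighbourhood of $\bar\lambda$ in the original coordinates and of arbitrarily small physical duration. The hyperbolic, $\rho$-contracting and $\rho$-expanding structure at the two equilibria furnished by the Shoshitaishvili normal form is precisely what makes this confinement uniform in the initial point $z$.
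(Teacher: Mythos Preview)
Your proposal is correct and follows essentially the same route as the paper: both arguments hinge on the uniform transit-time estimate $\Delta t\le \rho(s_0)/(-c_1)$ obtained from the exponential contraction (resp.\ expansion) of $\rho$ near the hyperbolic equilibria $\bar\lambda_{\theta_\pm}$, which confines the passage through the singular locus to arbitrarily small physical time and hence to an arbitrarily small neighbourhood of $\bar\lambda$. Your splitting into the ``inside $W$'' and ``outside $W$'' regimes is a somewhat more explicit packaging of the same idea, but the substance---including the three-zone decomposition near $\bar\lambda_{\theta_-}$, near $\bar\lambda_{\theta_+}$, and in between---is identical to the paper's proof.
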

\begin{proof}
It remains to prove the continuity  of the flow $(t;z)\rightarrow \lambda(t;z)$ with respect to $z$, thus we prove that for each $\varepsilon>0$ there exists a neighbourhood  $O^\varepsilon_{\bar{\lambda}}$ such that the maximum time interval of the extremals in this neighbourhood $\Delta_{O^\varepsilon_{\bar{\lambda}}}t$ is less than $\varepsilon$.\\
As we saw previously, the extremal through $\bar{\lambda}$ will arrive and go out with angles $\theta_-$ and $\theta_+$, then we can distinguish three parts of the extremals close to $\bar{\lambda}$: the parts in $O_{\bar{\lambda}_{\theta_-}}$ and in $O_{\bar{\lambda}_{\theta_+}}$, and that part in the middle that is close to $\rho=0$.\\
In this last region,  since for each extremal $\rho$ is close to $0$ and the correspondent time interval with time $s$ is bounded, then $\Delta t$ is arbitrarily small with respect to $O_{\bar{\lambda}}$.\\
Hence, in $O_{\bar{\lambda}_{\theta_\pm}}$ we are going to show that there exists a sequence of neighbourhoods of $\bar{\lambda}_{\theta_\pm}$
$$\left(O^R_{\theta_\pm}\right)_R,$$
such that $$\lim_{R\rightarrow 0^+}\Delta_{O^R_{\theta_\pm}}t=0.$$
For simplicity, we are going to prove this fact in $O_{\bar{\lambda}_{\theta_-}}$, because the situations in $O_{\bar{\lambda}_{\theta_-}}$ and $O_{\bar{\lambda}_{\theta_+}}$ are equivalent.\\
Let us denote $O^R_{\theta_-}$ a neighbourhood of $\bar{\lambda}_{\theta_-}$ such that $O^R_{\theta_-}\subseteq O_{\bar{\lambda}_{\theta_-}}$, for each $((\rho,\theta,h_3),x)\in O^R_{\theta_-}$ $\rho<R$ and $|\theta-\theta_-|<R$. Therefore, we can define $$M_R=\sup_{\lambda\in O^R_{\theta_-}}h_{0\theta}(\lambda),$$ and assume that it is strictly negative and finite, due to the fact that  we can choose $O_{\bar{\lambda}_{\theta_-}}$ in which $h_{0\theta}(\lambda)$ is strictly negative and finite.\\
Hence, for every extremal $\lambda(s)$ in $O^R_{\theta_-}$, until its $\rho$-component is different that zero, it holds
$$\frac{\dot{\rho}(s)}{\rho(s)}<M_R,
$$
then $$\rho(s)<\rho(s_0)e^{M_R(s-s_0)},$$ for every $s>s_0$.\\
Consequently, $\Delta_{O^R_{\theta_-}}t$ can be estimated in the following way:
$$\Delta_{O^R_{\theta_-}}t<\int^{\infty}_{s_0}\rho(s_0)e^{M_R(s-s_0)}ds=\frac{\rho(s_0)}{-M_R}<\frac{R}{-M_R}.
$$
Due to the fact that $\lim_{R\rightarrow 0^+}\frac{R}{-M_R}=0$, we have proved that for each $\varepsilon>0$ there exists $O^R_{\theta_-}$ such that $\Delta_{O^R_{\theta_-}}t<\varepsilon$
\end{proof}
\subsection{Proof of Proposition \ref{809} }
Let us assume that there exist a time-optimal control $\tilde{u}$, and an interval $(\tau_1,\tau_2)$ such that $\tilde{u}$ corresponds to an extremal $\lambda(t)$ in $O_{\bar{\lambda}}$, and $\lambda(t)\in \Lambda$, $\forall t\in(\tau_1,\tau_2)$. By construction, for $t\in(\tau_1,\tau_2)$ it holds
\begin{equation}
\label{556}
\left\lbrace
\begin{array}{l}
\frac{d}{dt}h_1(\lambda(t))=0\\
\frac{d}{dt}h_2(\lambda(t))=0.
\end{array}
\right.
\end{equation}
Since the maximized Hamiltonian associated to $\tilde{u}$ is
$$\mathcal{H}_{\tilde{u}}(\lambda)=h_0(\lambda)+\tilde{u}_1h_1(\lambda)+\tilde{u}_2h_2(\lambda),
$$
by Remark \ref{derivPoiss}, (\ref{556}) implies
\begin{equation}
\label{777}\left\lbrace
\begin{array}{l}
h_{01}(\lambda(t))-\tilde{u}_2 h_{12}(\lambda(t))=0\\
h_{02}(\lambda(t))+\tilde{u}_1 h_{12}(\lambda(t))=0.
\end{array}
\right.
\end{equation}
Moreover, due to condition (\ref{54310}) we can claim that $h_{12}(\lambda(t))\neq 0$ along this singular arc, therefore we have an explicit formulation of $\tilde{u}$ in a singular arc
\begin{equation}
\label{607}
\left\lbrace
\begin{array}{l}
\tilde{u}_1=-\frac{h_{02}(\lambda(t))}{h_{12}(\lambda(t))}\\
\tilde{u}_2=\frac{h_{01}(\lambda(t))}{h_{12}(\lambda(t))}.
\end{array}
\right.
\end{equation}
In particular, its norm is the following
$$||\tilde{u}||^2=\frac{h^2_{02}(\lambda(t))+h^2_{01}(\lambda(t))}{h^2_{12}(\lambda(t))}.
$$
If $r^2> h^2_{12}$, we arrive to a contradiction, because in this case $||\tilde{u}||^2>1$ but the norm of admissible controls is less equal than $1$. On the other hand, if $r^2< h^2_{12}$, such extremals might exist, but they are not optimal by the Goh Condition, presented at Subsection \ref{subsecgoh}.\\
Hence, we have proved that if $r^2\neq h^2_{12}$ there are no optimal extremals that lie in $\Lambda$ for a time interval.\\
On the other hand, by these observations, if $r^2= h^2_{12}$, optimal singular arcs could exists.
\subsection{Proof of Proposition \ref{resulteq}}
In the limit case $r^2=h^2_{12}$, by what we have just seen at the proof of Proposition \ref{809}, we can claim that there could be optimal trajectories, whose extremals lie in (\ref{020}), and the correspondent controls take values on the boundary of the disk $U$, with equation (\ref{607}).\\
Now, we are going to show that, given a time-optimal trajectory through $\bar{q}$, whose extremal has a point out of the singular locus, then it does not attain $\Lambda$ in finite time. \\
From Claim \ref{349} and Lemma \ref{lemma1} we already have an estimate of the behaviour of the extremal out of a small neighbourhood of $\bar{\lambda}_{\bar{\theta}}$, where $\bar{\theta}$ is the unique angle such that $h_{12}+\cos(\bar{\theta})h_{02}-\sin(\bar{\theta})h_{01}=0$. 
We are going to extend the estimate to a neighborhood $O_{\bar{\lambda}_{\bar{\theta}}}$ of
$\bar{\lambda}_{\bar{\theta}}$.\\
Without loss of generality, we assume that $\bar{\theta}=0$.\\
%We are going to study the system (\ref{11100s}) in $O_{\bar{\lambda}_{\bar{\theta}}}$ in the first and further order term via the Taylor expansion of functions $h_{ij}(\lambda)$ at $\bar{\lambda}$, with $i,j\in\{0,1,2,3\}$.\\
%At the first order, since the $x$ and $h_{3}$ components of the system (\ref{11100s}) are very slow in $O_{\bar{\lambda}_{\bar{\theta}}}$, we are allowed to freeze them and study the $\rho$ and $\theta$ components in the following worst scenery that we can face:
Let us omit some boring routine details and focus on the essential part of the estimate. First we freeze slow coordinates $x,h_3$ and study the system (\ref{11100}) with only two variables $\rho,\theta$. In the worst scenario we get the following system:

\begin{equation*}
\label{mel44}\left\lbrace \begin{array}{l}
\dot{\rho} =-\sin(\theta)-\rho\\
\dot{\theta} = \frac{1}{\rho}\left( 1-\cos(\theta) \right)+1.\\
\end{array}\right.
\end{equation*}
Consequently, the behaviour of $\rho$-component with respect the $\theta$-component is described by the following equation:
\begin{equation}
\label{mel45}
\rho'(\theta) =\frac{-\rho(\sin(\theta)+\rho)}{1-\cos(\theta) +\rho}.
\end{equation}
With the next Lemma \ref{lemminoino} we analyse (\ref{mel45}) and prove that, there exist a containing 0 interval $I$ on the $\theta$-axis, on which $\rho$ has a positive increment for any sufficiently small initial condition $\rho(0)=\rho_0>0$.\\
Lemma \ref{lemminoino} with Lemma \ref{lemma1} implies the thesis of Proposition \ref{resulteq}.
\begin{lemma}
\label{lemminoino}
Given $O_{\bar{\lambda}_{\bar{\theta}}}$, there exist $\eta>0$ small enough and $\theta_1>0$, such that for every initial values $(\rho(0),\theta(0))=(\rho_0,0)$ with $\rho_0\neq 0$, the solution of system (\ref{mel45}) satisfies the following implication: if $\theta>\theta_1$ then $$\rho(-\theta)<\rho(\eta\, \theta).$$
\end{lemma}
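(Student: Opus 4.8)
We must analyze the scalar ODE
$$
\rho'(\theta) =\frac{-\rho(\sin(\theta)+\rho)}{1-\cos(\theta) +\rho},
$$
and show that there exist a small $\eta>0$ and a positive $\theta_1$ such that every solution with $\rho(0)=\rho_0>0$ (small) satisfies $\rho(-\theta)<\rho(\eta\theta)$ whenever $\theta>\theta_1$. Intuitively the claim says that the solution, started on the line $\theta=0$, is larger a little to the right (at angle $\eta\theta$) than it is at the symmetric-looking negative angle $-\theta$; the asymmetry of the factor $1-\cos\theta$ versus $\sin\theta$ near $\theta=0$ drives this.

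**Plan.** First I would separate the two regimes according to the size of $\rho$ relative to $\theta^2$, since near $\theta=0$ we have $1-\cos\theta\sim\theta^2/2$ and $\sin\theta\sim\theta$, so the denominator behaves like $\tfrac{\theta^2}{2}+\rho$ and the numerator like $-\rho(\theta+\rho)$. The plan is to run a comparison/differential-inequality argument separately for $\theta>0$ and $\theta<0$. For $\theta>0$ small, $\sin\theta>0$ forces $\rho'<0$, so $\rho$ is decreasing; I would bound the rate of decrease from above by replacing the exact right-hand side with a simpler majorant (using $1-\cos\theta\ge 0$ and $\sin\theta\le\theta$) to get a lower bound for $\rho(\eta\theta)$. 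For $\theta<0$, where $\sin\theta<0$, the sign of the numerator can make $\rho$ increase; here I would instead produce an upper bound for $\rho(-\theta)$, again by comparison with a solvable or monotone majorant. Matching the two estimates at the common starting value $\rho_0$ and choosing $\eta$ small and $\theta_1$ appropriately should yield the strict inequality.

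**Key steps in order.** (1) Rescale to expose the relevant scale: set $\rho=\theta^2 w$ or $\rho=\theta\, w$ and track how $w$ evolves, so the singular factor $1/\rho$ hidden in the original $\dot\theta$ equation is tamed and the $1-\cos$ term becomes comparable to $\rho$. (2) Derive a one-sided differential inequality on each side of $0$: on the right, $\rho'(\theta)\ge -\rho\,\frac{\sin\theta+\rho}{\rho}= -(\sin\theta+\rho)$ type bounds giving controlled decay; on the left, exploit $\sin\theta<0$ to show $\rho$ does not drop below $\rho_0$ by more than a quantity we can dominate. (3) Integrate the inequalities to obtain explicit lower/upper envelopes $\rho_{\pm}(\theta)$ with $\rho_\pm(0)=\rho_0$. (4) Compare $\rho(\eta\theta)\ge\rho_-(\eta\theta)$ against $\rho(-\theta)\le\rho_+(-\theta)$ and show $\rho_-(\eta\theta)>\rho_+(-\theta)$ for $\theta>\theta_1$ once $\eta$ is small; the factor $\eta$ buys an arbitrarily small decrease on the right while the left side, controlled by the quadratic smallness of $1-\cos\theta$, cannot have increased enough to overtake it. (5) Finally, combine with Lemma \ref{lemma1} and Claim \ref{349} to conclude that an extremal starting off $\Lambda$ cannot reach $\Lambda$ in finite time, which is the thesis of Proposition \ref{resulteq}.

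**Main obstacle.** The delicate point is the behaviour on the left side $\theta<0$, where $\rho$ may genuinely increase and the naive bounds are not monotone; one must show that this increase is strictly smaller than the (tiny, $\eta$-scaled) value on the right. This requires a sharp-enough estimate of the competition between the $O(\theta)$ term $\sin\theta$ and the $O(\theta^2)$ term $1-\cos\theta$ in the denominator, uniformly for all small $\rho_0$. Getting a bound that is uniform in $\rho_0$ (so that the same $\eta,\theta_1$ work for every sufficiently small initial condition) is where the care lies; I expect to handle it by the rescaling in step (1), which removes the dependence on $\rho_0$ from the leading dynamics and reduces the problem to a single comparison inequality independent of the initial scale.
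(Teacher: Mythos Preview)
Your plan misidentifies the mechanism and, as written, would prove the opposite inequality. At $\theta=0$ both sides equal $\rho_0$; differentiating the two ODEs gives $\tilde\rho'(0)=\rho_0>0$ for $\tilde\rho(\theta):=\rho(-\theta)$ while $\hat\rho'(0)=-\eta\rho_0<0$ for $\hat\rho(\theta):=\rho(\eta\theta)$, so for \emph{small} $\theta>0$ one has $\rho(-\theta)>\rho_0>\rho(\eta\theta)$, not the other way around. Hence the lemma does \emph{not} follow from ``the right side decreases only a little and the left side cannot have increased enough to overtake it'': that sentence describes exactly the regime $0<\theta<\theta_1$ in which the desired inequality is \emph{false}. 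The actual content of the lemma is a crossing phenomenon: $\rho(-\theta)$ first rises, then turns and decays \emph{faster} than $\rho(\eta\theta)$ (which decays slowly because $\eta$ is small), so the two curves cross at some $\theta_1>0$ and the inequality holds only beyond that point. Your differential-inequality scheme---lower bound on $\rho(\eta\theta)$, upper bound on $\rho(-\theta)$---cannot capture this crossing unless the bounds are sharp enough to see the turn of $\rho(-\theta)$; crude majorants such as $\tilde\rho'\le\tilde\rho-\sin\theta$ give $\tilde\rho(\theta)\le\rho_0e^\theta+O(\theta)$, which is useless here.

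Your proposed rescalings $\rho=\theta^2 w$ or $\rho=\theta w$ are also unsuitable: they force $\rho(0)=0$, incompatible with $\rho(0)=\rho_0>0$. The correct scale is set by $\rho_0$, not by $\theta$. The paper writes $\theta=st$, $\tilde\rho=s+s^2x(t)$, $\hat\rho=s+s^2y(t)$ with $s=\rho_0$; to leading order as $s\to 0$ one finds $x'=1-t$, $y'=-\eta(1+\eta t)$, hence $x(t)-y(t)=(1+\eta)t-\tfrac{1-\eta^2}{2}t^2$, which is positive for small $t$ and negative for $t>2/(1-\eta)$, locating the crossing at $\theta_1\sim 2\rho_0/(1-\eta)$. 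A separate comparison step (in the region $\theta>\nu\rho$ one checks $\hat\rho'>\tilde\rho'$) then guarantees that once $\hat\rho>\tilde\rho$ the inequality persists. Your proposal is missing both ingredients: the $\rho_0$-based rescaling that exposes the quadratic turn of $x$, and an argument that the ordering, once achieved, is preserved.
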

\begin{proof}
Given any $\eta>0$ and any solution of (\ref{mel45}) $\rho(\theta)$, we are going to compare the behaviour of $\tilde{\rho}(\theta)=\rho(-\theta)$ and $\hat{\rho}(\theta)=\rho(\eta\,\theta)$ for $\theta>0$.\\
They will be solutions for $\theta>0$ of the following two systems
$$
\tilde{\rho}'(\theta)=\frac{\tilde{\rho}(\tilde{\rho}-\sin(\theta))}{1-\cos(\theta)+\tilde{\rho}}
$$
and
$$
\hat{\rho}'(\theta)=-\eta\frac{\hat{\rho}(\hat{\rho}+\sin(\eta\,\theta))}{1-\cos(\eta\,\theta)+\hat{\rho}}.
$$
We can see that $\tilde{\rho}'(0)>\hat{\rho}'(0)$, thus if $\theta$ is very small it holds $\tilde{\rho}(\theta)>\hat{\rho}(\theta)$.\\
On the other hand, let us notice that choosing $\eta>0$ small there exists $\nu>1$ such that if $\theta>\nu\,\rho$ then $\hat{\rho}'(\theta)>\tilde{\rho}'(\theta)$. By the classical theory of dynamical system, this implies that in the domain $$\{(\rho,\theta)\,|\,\theta>\nu \rho\}$$ if $\hat{\rho}(\theta)>\tilde{\rho}(\theta)$ at a certain $\theta>0$, then the inequality remains true for every bigger value.\\
In order to compare the behaviour of $\tilde{\rho}(\theta)$ and $\hat{\rho}(\theta)$ when $\rho_0$ tends to zero, we consider the following re-scaling:
$$\left\lbrace
\begin{array}{l}
\theta=st\\
 \tilde{\rho}=s+s^2x(t)\\
 \hat{\rho}=s+s^2y(t)
\end{array}
\right.
$$
where $s$ is the initial value $\rho_0$ and $x(0)=y(0)=0$.\\
One can easily notice that if $s$ tends to $0$ then
$$\left\lbrace
\begin{array}{l}
x'(t)=1-t+O(s)\\
y'(t)=\eta(-1-\eta t)+O(s),
\end{array}
\right.
$$
hence, it holds
$$\left\lbrace
\begin{array}{l}
x_0(t)=t-\frac{1}{2}t^2+O(s)\\
y_0(t)=-\eta t-\frac{\eta^2}{2}t^2+O(s),
\end{array}
\right.
$$
and
$$x_0(t)-y_0(t)=t\left( (1+\eta) -\frac{(1-\eta^2)}{2}t\right)+O(s).
$$
Hence, there exist $T>2\frac{1+\eta}{1-\eta^2}>2$, such that, denoting $\rho^{\mathrm{MAX}}_0$ the maximum among the initial values $\rho_0$ in $O_{\bar{\lambda}_{\bar{\theta}}}$, and calling $\theta_1=\rho^{\mathrm{MAX}}_0 T$, it holds that if $\theta>\theta_1$ then $\tilde{\rho}(\theta)<\hat{\rho}(\theta)$, namely
$$\rho(-\theta)<\rho(\eta\, \theta).$$
\end{proof}

\bigskip

\end{document}